\newcommand{\ds}{\displaystyle}
\newcommand{\normal}{\trianglelefteq}
\newcommand{\F}{\mathbb{F}}
\newcommand{\Z}{\mathbb{Z}}
\newcommand{\Q}{\mathbb{Q}}
\newcommand{\R}{\mathbb{R}}
\newcommand{\Aut}{\textnormal{Aut}}
\newcommand{\Gal}{\textnormal{Gal}}
\title{Cohen-Lenstra Moments for Some Nonabelian Groups}
\author{Brandon Alberts}
\newtheorem*{lemma*}{Lemma}
\newtheorem{theorem}{Theorem}[section]
\newtheorem{lemma}[theorem]{Lemma}
\newtheorem{definition}{Definition}[section]
\newtheorem{corollary}{Corollary}[section]
\newtheorem{proposition}{Proposition}[section]
\begin{document}

\maketitle

\begin{abstract}
Cohen and Lenstra detailed a heuristic for the distribution of odd p-class groups for imaginary quadratic fields. One such formulation of this distribution is that the expected number of surjections from the class group of an imaginary quadratic field $k$ to a fixed odd abelian group is $1$. Class field theory tells us that the class group is also the Galois group of the Hilbert class field, the maximal unramified abelian extension of $k$, so we could equivalently say the expected number of unramified $G$-extensions of $k$ is $1/\#\Aut(G)$ for a fixed abelian group $G$. We generalize this to asking for the expected number of unramified $G$-extensions Galois over $\Q$ for a fixed finite group $G$, with no restrictions placed on $G$. We review cases where the answer is known or conjectured by Boston-Wood, Boston-Bush-Hajir, and Bhargava, then answer this question in several new cases. In particular, we show when the expected number is zero and give a notrivial family of groups realizing this. Additionally, we prove the expected number for the quaternion group $Q_8$ and dihedral group $D_4$ of order $8$ is infinite. Lastly, we discuss the special case of groups generated by elements of order $2$ and give an argument for an infinite expected number based on Malle's conjecture.
\end{abstract}

\section{Introduction}

The purpose of this paper is to find Cohen-Lenstra moments for nonabelian groups. Recall the original heuristic propsed in [CL]: the set of finite odd abelian $p$-groups has a probability distribution given by the probability of $A$ is proportional to $1/\#\Aut(A)$. Cohen and Lenstra presented evidence that unramified $A$-extensions over imaginary quadratic fields ordered by discriminant where distributed in the same way. So far only the $C_3$ case of this heuristic has been explicitely proven. They provide similar evidence for a probability proportional to $1/(\#A \#\Aut(A))$ over real quadratic fields.

There is not a clear choice for a probability distribution on finite nonabelian groups. [BBH] and [BW] provide evidence for a particular distribution on nonabelian $p$-groups which does not generalize easily. We instead look to the equivalent moments version of Cohen-Lenstra heuristics. Define $\mathcal{D}^{\pm}_X=\{ k \text{ quadratic field}: 0\le \pm d_k \le X\}$. The expected number of unramified $G$-extensions over real/imaginary quadratic fields is given by
\begin{align*}
E^{\pm}(G) &:=\lim_{X\rightarrow \infty} \frac{\sum_{k\in\mathcal{D}^{\pm}_X} \#\{K/k \text{ unramified with Galois group } G\text{, Galois over }\Q\}}{\sum_{k\in\mathcal{D}^{\pm}_X}1}
\end{align*}
Given an odd finite abelian group $A$, the moments version of Cohen-Lenstra heuristics claims
\begin{align*}
E^+(A)&=\frac{1}{\#A\#\Aut(A)}\\
E^-(A)&=\frac{1}{\#\Aut(A)}
\end{align*}
For an arbitrary group $G$, even if the group is restricted to be odd order, the above does not necessarily hold [BBH][BW][Bh]. Melanie Matchett Wood discusses a generalized conjecture of this form [W]. She conjectures that $E^\pm(G)$ should be infinite whenever there is more than one conjugacy class of elements of order two in $G\rtimes C_2 - G$, with the semidirect product given by the action of an inertia group in $k$, and finite otherwise. The over arching goal of this paper is to determine $E^\pm$ in several nonabelian cases, confirming Wood's conjecture in those cases.

In the first section, we discuss necessary properties for a group $G$ to have on order for $E^\pm (G)$ to be nonzero. Namely that $G$ must have a particular extension refered to as a GI-extension. [HM] then can be used to conclude that almost all nonabelian $p$-groups have $E^\pm(G)=0$. In the following section, we then determine the number of GI-extensions for the group of affine transformations $\{x\mapsto ax+b:a,b\in\F_q\text{ with }a^d=1\}$. Notable, infinitely many of these groups have no GI-extensions and so also have $E^\pm (G)=0$.

Extending work of Lemmermeyer [L], in the fourth section we consider the quaternion group $Q_8=\langle i,j,k : i^2=j^2=k^2=ijk=-1 \rangle$ and the dihedral group $D_4=C_4\rtimes C_2$. For both groups, we use analytic methods to show $E^\pm (G)=\infty$.

Lastly, we address the case of so called trivial GI-extensions. Namely, when we impose the additional condition that $\Gal(K/\Q)=G\times C_2$. When this occurs, we show that we can bypass the questionable parts of Malle's conjecture to provide exceptionally strong evidence that $E^\pm(G)=\infty$.

\section{GI-extensions}

Given an unramified extension $K/\Q(\sqrt{D})$ normal over $\Q$, $\Gal(K/\Q)$ is generated by its inertia subgroups, all of which necessarily have order 1 or 2 and are not contained in $\Gal(K/\Q(\sqrt{D})$. This motivated Boston to make the following definition [Bo]:

\begin{definition}
Given $G\normal G'$ of index $2$, we say $G'$ is a GI-extension of $G$ if it is generated by involutions not contained in $G$.
\end{definition}

Here the GI can be taken to stand for ``Generated by Involutions". We have an equivalent formulation of this definition which can be easily shown:

\begin{lemma}
$G'$ is a GI-extension of $G$ iff $G'\cong G\rtimes C_2$ where $C_2$ acts on $G$ by an automorphism $\sigma\in\Aut(G)$ such that $G$ is generated by$\{ g\in G : g^\sigma=g^{-1}\}$
\end{lemma}

We now notice that GI can coincidentally be taken to stand for ``Generator Inverting". As such, we call any $\sigma$ satisfying the above condition to be a GI-automorphism of $G$. Consequently we have [Bo]:

\begin{corollary}
The correspondence above is a bijection between GI-extensions of $G$ and $\{ C\subset \textnormal{Out}(G) : C \textnormal{ is a conjugacy class containing the coset of a GI-automorphism }\sigma\textnormal{ of }G\}$
\end{corollary}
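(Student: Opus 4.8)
The plan is to realize the stated bijection through the \emph{outer action} invariant of a GI-extension, reducing everything to the semidirect-product description supplied by the Lemma. Throughout I regard two GI-extensions $G\normal G'$ and $G\normal G''$ as the same object when there is an isomorphism $\psi\colon G'\to G''$ with $\psi(G)=G$. Given a GI-extension $G'$, the Lemma guarantees an involution $t\in G'\setminus G$; conjugation by $t$ restricts to an automorphism $\sigma=c_t|_G\in\Aut(G)$, and I send $G'$ to the conjugacy class in $\textnormal{Out}(G)$ of its image $\bar\sigma$. The first task is well-definedness: any two involutions $t,t'\in G'\setminus G$ satisfy $t'=gt$ with $g\in G$, so $c_{t'}=c_gc_t$ and $\bar\sigma$ is unchanged in $\textnormal{Out}(G)$; and if $\psi$ is an isomorphism fixing $G$ setwise with $\alpha=\psi|_G$, then $\psi(t)$ is an involution outside $G$ with $c_{\psi(t)}|_G=\alpha\sigma\alpha^{-1}$, so the associated class is merely conjugated by $\bar\alpha$. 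Since $\sigma$ is a GI-automorphism by the Lemma, the class produced always contains the coset of a GI-automorphism, so the map lands in the asserted target set.

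Surjectivity is then immediate from the Lemma: given a conjugacy class containing the coset of a GI-automorphism $\sigma$, the group $G\rtimes_\sigma C_2$ is a GI-extension whose outer action is exactly $\bar\sigma$. Here one uses that a GI-automorphism automatically satisfies $\sigma^2=\mathrm{id}$, since for $g$ with $\sigma(g)=g^{-1}$ one has $\sigma^2(g)=\sigma(g)^{-1}=g$, so $\sigma^2$ fixes every element of the generating set $\{g:g^\sigma=g^{-1}\}$ and hence all of $G$; this is what makes the semidirect product by $C_2$ legitimate.

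The real content is injectivity: if two GI-extensions have conjugate outer classes I must produce an isomorphism over $G$. After composing with an automorphism of $G$ I may assume the outer actions coincide, so the extensions are $G\rtimes_\sigma C_2$ and $G\rtimes_{\sigma'}C_2$ with $\sigma'=c_m\sigma$ for some $m\in G$ and with $\sigma,\sigma'$ both GI-automorphisms. To build $\psi$ fixing $G$ and sending $\tau$ to $g_0\tau'$, the homomorphism condition forces $c_{g_0}\sigma'=\sigma$, i.e. $g_0\in m^{-1}Z(G)$, while demanding that $g_0\tau'$ be an involution forces $\sigma'(g_0)=g_0^{-1}$; writing $g_0=m^{-1}z$ collapses both requirements into the single central equation $\sigma(z)z=m\sigma(m)$ for $z\in Z(G)$. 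I expect this equation to be the main obstacle: its solvability is exactly the vanishing of the class of $m\sigma(m)$ in $Z(G)^\sigma/\mathrm{im}(1+\sigma)$ (a Tate-cohomology group $\hat H^0(C_2,Z(G))$), and when $\psi$ is required to fix $G$ pointwise it need not vanish. The resolution is to spend the remaining freedom in the construction — both re-choosing which involution of $G\rtimes_{\sigma'}C_2$ plays the role of $\tau'$, i.e. replacing $\sigma'$ by $c_h\sigma'$ for $h$ in $\{h\in G:\sigma'(h)=h^{-1}\}$, and allowing an automorphism $\alpha$ centralizing $\bar\sigma$ — to push the residual central class to zero. Checking that the GI-generation hypothesis on both $\sigma$ and $\sigma'$ forces a compatible choice to exist is the step I would expect to demand the most care, and it is precisely where the hypothesis that $G$ is generated by its inverted elements should be used in an essential rather than a formal way.
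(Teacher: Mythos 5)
Your setup is the natural one and is almost certainly the route of the cited source: every GI-extension splits as $G\rtimes_\sigma C_2$ (an involution $t\notin G$ generates a complement), the class of $\bar\sigma$ in $\textnormal{Out}(G)$ is independent of the choice of $t$ and transforms by conjugation under isomorphisms preserving $G$ setwise, and surjectivity follows by forming $G\rtimes_\sigma C_2$ from a GI-representative, using $\sigma^2=\mathrm{id}$ exactly as you observe. Note that the paper itself gives no proof of this corollary --- it is quoted from [Bo] --- so there is nothing internal to compare against; judged on its own terms, your argument is incomplete at precisely the point you flag.

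The gap is injectivity. You correctly reduce to $\sigma'=c_m\sigma$ with both $\sigma,\sigma'$ GI-automorphisms, and correctly identify the obstruction to an isomorphism over $G$: solvability of $z\sigma(z)=m\sigma(m)$ in $Z(G)$, i.e.\ vanishing of the class of $m\sigma(m)$ in $\hat H^0(C_2,Z(G))=Z(G)^\sigma/N(Z(G))$. But you then only say you \emph{expect} the remaining freedom (replacing $m$ by $hm$ with $\sigma'(h)=h^{-1}$, and conjugating by automorphisms whose outer class centralizes $\bar\sigma$) to kill this class, and that verifying this is the step ``demanding the most care.'' That step \emph{is} the content of the corollary --- everything else is formal --- and it genuinely requires the GI hypothesis rather than following from generalities: for $G=Q_8$, $\sigma=c_i$, $m=i$ one gets $m\sigma(m)=-1$, which is not a norm from $Z(Q_8)=\{\pm 1\}$ (the norm map is $z\mapsto z\sigma(z)=z^2=1$); the statement survives only because $\sigma'=c_{i^2}=\mathrm{id}$ fails to be a GI-automorphism of $Q_8$, so this pair never occurs. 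Until you prove that for two genuine GI-automorphisms lying in $\textnormal{Out}(G)$-conjugate cosets the residual central class can always be removed --- equivalently, that the sets $\{g:\sigma(g)=g^{-1}\}\,Z(G)$ and $\{h:\sigma'(h)=h^{-1}\}\,m\,Z(G)$ meet --- the argument is a sketch whose key lemma is left as a conjecture.
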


Consider the following examples, the proofs of which are simple exercises:

\begin{lemma}
\begin{enumerate}
\item{If $A$ is an abelian group, then $A$ has a unique GI-extension given by the automorphism $\sigma(a)=-a$.}
\item{$G\times C_2$ is a GI-extension of $G$ iff $G$ is generated by elements of order $2$.}
\item{$S_n$ is a GI-extension of $A_n$.}
\end{enumerate}
\end{lemma}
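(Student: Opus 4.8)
The plan is to verify each part through the GI-automorphism characterization established above rather than through the raw definition, since in each case the relevant automorphism $\sigma$ is transparent and the only real content is checking that the inverted elements $\{g : g^\sigma = g^{-1}\}$ generate $G$.

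For part (1) I would first note that $\sigma(a)=a^{-1}$ is genuinely an automorphism precisely because $A$ is abelian, and that \emph{every} element of $A$ satisfies $a^\sigma = a^{-1}$, so the inverted set is all of $A$ and trivially generates; hence $\sigma$ is a GI-automorphism and $A\rtimes_\sigma C_2$ is a GI-extension. The substance is uniqueness. Here I would take an arbitrary GI-automorphism $\tau$ and set $S=\{a\in A : \tau(a)=a^{-1}\}$. Using commutativity I would check that $S$ is a subgroup: it contains the identity, it is closed under inverses, and it is closed under products since $\tau(a)=a^{-1}$ and $\tau(b)=b^{-1}$ give $\tau(ab)=a^{-1}b^{-1}=(ab)^{-1}$. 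Because $S$ generates $A$ and is a subgroup, $S=A$, forcing $\tau=\sigma$. Thus $\sigma$ is the unique GI-automorphism, and by the correspondence of the Corollary there is exactly one GI-extension. I expect this closure argument to be the main (if modest) obstacle, as it is the one step that goes beyond unwinding definitions.

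For part (2) I would realize $G\times C_2$ as $G\rtimes_\sigma C_2$ with trivial action $\sigma=\mathrm{id}$, since the $C_2$ factor centralizes $G$. The inverting set of the preceding Lemma is then $\{g\in G : g=g^{-1}\}$, which is exactly the set of elements of order dividing $2$. The requirement that this set generate $G$ is literally the statement that $G$ is generated by its involutions, so both directions of the equivalence fall out at once.

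For part (3), assuming $n\ge 2$ so that $A_n$ has index $2$ in $S_n$, I would simply recall that the transpositions are involutions lying outside $A_n$, being odd permutations, and that they generate $S_n$. This exhibits $S_n$ as generated by involutions not contained in $A_n$, which is the definition of a GI-extension, and no further computation is required.
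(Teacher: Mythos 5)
Your argument is correct, and since the paper dismisses this lemma with ``the proofs of which are simple exercises,'' yours is essentially the intended verification: the key observation in part (1) that the inverted set $S=\{a:\tau(a)=a^{-1}\}$ is a subgroup (using commutativity) and hence all of $A$ is exactly the right one, and parts (2) and (3) are the direct checks you describe. The only point worth a passing remark is in part (2): strictly speaking one should note that any realization of the pair $(G,G\times C_2)$ as $G\rtimes_\sigma C_2$ forces $\sigma$ to be conjugation by an involution $g_0$, in which case the inverted set is $g_0\cdot\{g:g^2=1\}$ and still generates exactly when the involutions of $G$ do, so restricting to $\sigma=\mathrm{id}$ loses nothing.
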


\begin{corollary}
There exist groups $G$ with more than one non-isomorphic GI-extensions
\end{corollary}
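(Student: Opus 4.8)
The plan is to exhibit a single group $G$ carrying two genuinely different GI-extensions, and the cleanest source is the alternating groups. I would take $G = A_n$ with $n \geq 5$ (one may fix $n = 5$ for concreteness) and extract two GI-extensions directly from the preceding Lemma. Part (3) of that Lemma hands us one for free: $S_n$ is a GI-extension of $A_n$. For a second one, I would observe that $A_n$ is generated by involutions, so that part (2) of the Lemma makes $A_n \times C_2$ a GI-extension as well.

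To justify that $A_n$ is generated by involutions for $n \geq 5$, I would note that the set of involutions in $A_n$ is nonempty, e.g. $(1\,2)(3\,4)$, and closed under conjugation, so the subgroup it generates is a nontrivial normal subgroup; since $A_n$ is simple for $n \geq 5$, that subgroup must be all of $A_n$. Phrased via the Corollary, this says both cosets of $\textnormal{Out}(A_n) \cong C_2$ contain GI-automorphisms, namely the identity and conjugation by a transposition, and the two cosets yield the extensions $A_n \times C_2$ and $S_n$ respectively.

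It then remains only to check that these two GI-extensions are non-isomorphic, i.e. $S_n \not\cong A_n \times C_2$. I would distinguish them by their centers: for $n \geq 3$ the group $S_n$ is centerless, whereas $A_n \times C_2$ has a central subgroup $C_2$, so the two cannot be isomorphic. This completes the construction. No step here is delicate; the only points needing care are the simplicity argument showing $A_n = \langle \text{involutions}\rangle$ and the center computation $Z(S_n) = 1$, both of which are standard and can be verified outright for the single case $n = 5$ if a self-contained witness is preferred.
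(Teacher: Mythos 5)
Your argument is correct and is essentially the paper's own proof: the paper likewise takes $A_n$ for $n\ge 5$, invokes parts (2) and (3) of the preceding lemma, and notes $S_n\not\cong A_n\times C_2$. Your extra details (simplicity of $A_n$ to get generation by involutions, and the center computation to distinguish the two extensions) just fill in what the paper leaves as an exercise.
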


This follows from points 2 and 3 in the above lemma, $A_n$ being generated by elements of order $2$ for $n\ge 5$ and $S_n\not\cong A_n\times C_2$.\\
\\
In generalizing Cohen-Lenstra heuristics to nonabelian groups, it is then more useful to divide our cases based on the isomorphism class of the pair $(\Gal(K/\Q(\sqrt{D})$, $\Gal(K/\Q)$ to account for the differences in action. Consider the following generalization made by Bhargava [Bh]: we find the expected number of times the pair $(G,G')$ occurs as $(\Gal(K/\Q(\sqrt{D})$, where $G'$ is a GI-extension of $G$. Define:
\begin{align*}
E^{\pm}(G,G') &:=\lim_{X\rightarrow \infty} \frac{\sum_{k\in\mathcal{D}^{\pm}_X} \#\{K/k \text{ unramified with Galois group } G\text{, }\Gal(K/\Q)=G'\}}{\sum_{k\in\mathcal{D}^{\pm}_X}1}
\end{align*}
Note that this does not alter Cohen-Lenstra for abelian groups, as all abelian groups have a unique GI-extension. When expressed in this form, Bhargava proved the following for $n=3,4,5$:
\begin{align*}
E^\pm(S_n,S_n\times C_2)&=\infty\\
E^+(A_n,S_n)&=\frac{1}{n!}\\
E^-(A_n,S_n)&=\frac{1}{2(n-2)!}
\end{align*}
In general, one clearly has $E^\pm(G)=\sum_{G'}E^\pm(G,G')$ equating the questions.
\begin{corollary}
For primes $p\ne 2$, almost all finite $p$-groups $G$ do not have a GI-extension. In particular, $E^\pm(G)=0$.
\end{corollary}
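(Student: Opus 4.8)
The plan is to show that for odd $p$ the existence of a GI-extension forces $\#\Aut(G)$ to be even, and then to invoke the theorem of [HM] that $\Aut(G)$ is a $p$-group for almost all $p$-groups $G$. The reduction to $E^\pm(G)=0$ is immediate from the opening remark of this section: any unramified $K/\Q(\sqrt{D})$ normal over $\Q$ has $\Gal(K/\Q)$ generated by inertia subgroups of order $1$ or $2$ lying outside $\Gal(K/\Q(\sqrt{D}))$, so $\Gal(K/\Q)$ is a GI-extension of $G=\Gal(K/\Q(\sqrt{D}))$. Thus if $G$ has no GI-extension the numerator defining $E^\pm(G)$ is identically zero, giving $E^\pm(G)=0$. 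It therefore suffices to prove that almost all $p$-groups have no GI-extension.

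The heart of the matter is a Frattini-quotient obstruction. Suppose a nontrivial $p$-group $G$ admits a GI-automorphism $\sigma$, so $S=\{g\in G: g^\sigma=g^{-1}\}$ generates $G$. Write $\bar{G}=G/\Phi(G)\cong\F_p^d$ with $d\ge 1$, and let $\bar\sigma\in\mathrm{GL}_d(\F_p)$ be the induced map. The image $\bar{S}$ of $S$ spans $\bar{G}$, and for each $\bar{g}\in\bar{S}$ we have $\bar\sigma(\bar{g})=\overline{g^{-1}}=-\bar{g}$; since $\bar{S}$ spans, this forces $\bar\sigma=-\mathrm{id}$ on $\bar{G}$.

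Because $p$ is odd and $d\ge 1$, the element $-\mathrm{id}$ has order exactly $2$ in $\mathrm{GL}_d(\F_p)$, and the order of $\bar\sigma$ divides the order of $\sigma$ in $\Aut(G)$; hence $2\mid\#\Aut(G)$. Equivalently, if $\Aut(G)$ is a $p$-group with $p$ odd, then $G$ admits no GI-automorphism, and so by the correspondence in the Corollary above no GI-extension.

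Finally, by [HM] the proportion of groups of order $p^n$ whose automorphism group is a $p$-group tends to $1$ as $n\to\infty$. The $p$-groups admitting a GI-extension are contained in the complementary family with $2\mid\#\Aut(G)$, which therefore has density $0$; this yields $E^\pm(G)=0$ for almost all $p$-groups and completes the argument. I expect the only genuine step to be the Frattini reduction translating ``$G$ has a GI-automorphism'' into the parity condition $2\mid\#\Aut(G)$; after that the result rests entirely on [HM], which supplies all of the counting. Note that abelian and other small $p$-groups do have GI-extensions, but they lie in the density-zero complement of the [HM] family, so they do not affect the conclusion.
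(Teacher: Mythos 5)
Your proof is correct, and it rests on the same two pillars as the paper's: the reduction of $E^\pm(G)=0$ to the nonexistence of GI-extensions, and the theorem of [HM] that almost all $p$-groups have $p$-group automorphism groups. The one place where you diverge is in how you rule out a GI-automorphism when $\Aut(G)$ is an odd $p$-group. The paper argues directly from the definition: since a GI-extension is $G\rtimes C_2$, the acting automorphism $\sigma$ has order dividing $2$, so in an odd-order $\Aut(G)$ it must be the identity; and the identity is a GI-automorphism only if $G$ is generated by elements of order dividing $2$, which fails for a nontrivial odd $p$-group. You instead pass to the Frattini quotient $G/\Phi(G)\cong\F_p^d$ and observe that any GI-automorphism must induce $-\mathrm{id}$ there (since the generator-inverting set surjects onto a spanning set), which has order exactly $2$ for odd $p$, forcing $2\mid\#\Aut(G)$. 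Your route is slightly longer but has the small advantage of not invoking the a priori constraint $\sigma^2=1$ and of handling the ``$\sigma=\mathrm{id}$'' degenerate case automatically; the paper's route is more immediate from the definition. Both are complete, and the counting content is identical, coming entirely from [HM].
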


This follows immediately from [HM] stating that almost all $p$-groups have their automorphism group also a $p$-group: a GI-automorphism necessarily has order dividing $2$, so for $p\ne 2$ almost all $p$-groups have at most one such automorphism, the identity. This is a GI-automorphism iff the group is generated by elements of order $2$, which is not the case for $p\ne 2$.\\
\\
In the next section we present another family of groups without GI-extensions.

\section{A family of groups without GI-extensions}

\begin{definition}
Let $q=p^n$ be a prime power and $d\mid q-1$, then define $G(q,d)=\{ x\mapsto ax+b : a,b\in \F_q\text{ with } a^d=1\}$. Equivalently, $G(q,d)\cong C_p^n\rtimes C_d$ where $C_p^n$ is the additive group of $\F_q$, and $C_d\le \F_q^\times$ acts on it by multiplication.
\end{definition}

In particular, the action of $G(q,d)$ on $\F_q^+$ makes it a Frobenius group [Rotman p. 252]:

\begin{definition}
A group $G$ is a Frobenius group if there is an action of $G$ on some set $X$ such that every nonidentity element has at most one fixed point. Then the collection of elements with no fixed points together with the identity form a normal subgroup called the Frobenius kernel $K$ and $G/K=H$ is called the Frobenius complement. $G\cong K\rtimes H$.
\end{definition}

In the case of $G(q,d)$, $K\cong C_p^n$ and $H\cong C_d$.

Let $G=K\rtimes_{\phi} H$ be an arbitrary Frobenius group with kernel $K$ and complement $H$ [R p. 253] where $\phi:H\rightarrow \Aut(K)$ gives the action. Define $\pi_K:G\rightarrow H$ to be the quotient map.

\begin{lemma} $K\le G$ is characteristic and $\#H \mid \#K-1$. \end{lemma}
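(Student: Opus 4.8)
The plan is to reduce both assertions to a single structural fact: in a Frobenius group the conjugation action of the complement $H$ on the kernel $K$ (which is exactly the action $\phi$) is fixed-point-free, meaning that for $h\in H$ with $h\neq 1$ the equation $hkh^{-1}=k$ forces $k=1$. First I would extract this from the defining action of $G$ on a set $X$. Realize the complement $H$ as a point stabilizer $G_{x_0}$; then $K\cap H=1$, since a nonidentity element of $K$ fixes no point whereas every element of $H$ fixes $x_0$. Now if $h\neq 1$ centralizes some $k\in K$, then $h$ fixes both $x_0$ and $k\cdot x_0$, and because a nonidentity element has at most one fixed point we get $k\cdot x_0=x_0$, so $k\in H\cap K=1$. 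This is precisely the statement that each nontrivial $\phi(h)$ has no nontrivial fixed point on $K$.

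With this in hand the divisibility $\#H\mid \#K-1$ is immediate: $H$ acts on the set $K\setminus\{1\}$ by conjugation, and the fixed-point-free property says every point stabilizer is trivial, so the action is free. Hence every orbit has size $\#H$, these orbits partition a set of size $\#K-1$, and therefore $\#H\mid \#K-1$. In particular $\gcd(\#K,\#H)=1$, since any common divisor of $\#K$ and $\#H$ divides both $\#K$ and $\#K-1$.

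To show $K\normal G$ is characteristic I would use that the coprimality just obtained makes $K$ a normal Hall subgroup. Let $\pi$ be the set of primes dividing $\#K$. For any $\pi$-subgroup $P$ the image $PK/K\cong P/(P\cap K)$ is a $\pi$-group sitting inside $G/K\cong H$, whose order is coprime to $\pi$; this forces $PK/K=1$, i.e. $P\le K$. Applying this to $P=\langle g\rangle$ shows every $g$ with $g^{\#K}=1$ lies in $K$, and the reverse inclusion is Lagrange, so $K=\{g\in G: g^{\#K}=1\}$. This description is manifestly preserved by every automorphism of $G$, since automorphisms preserve element orders; hence $\alpha(K)=K$ for all $\alpha\in\Aut(G)$, which is the claim.

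I expect the only delicate point to be the first paragraph: passing from the abstract data $G\cong K\rtimes_\phi H$ to a genuinely fixed-point-free conjugation action, which requires pinning $H$ down as a point stabilizer and deducing $K\cap H=1$ from the single-fixed-point hypothesis. Once freeness of the conjugation action is established, the remaining steps are routine orbit counting and the standard fact that a normal Hall subgroup is characteristic.
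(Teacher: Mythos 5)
Your proof is correct. The paper does not actually supply an argument for this lemma --- it dismisses it, together with the next one, as ``a simple exercise in semidirect products and facts about Frobenius groups'' --- and the facts you deploy (realizing the complement as a point stabilizer $G_{x_0}$, deducing that the conjugation action of $H$ on $K\setminus\{1\}$ is free, counting orbits to get $\#H\mid \#K-1$, and then invoking that a normal Hall subgroup is characteristic via the characterization $K=\{g\in G: g^{\#K}=1\}$) are precisely the standard ones that exercise is meant to invoke. The only step needing mild care is the one you flag: identifying $H$ with a stabilizer and getting $K\cap H=1$ requires the transitive-action setup of the reference [R, p.~252--253] that the paper is working from, and your use of it is legitimate since the conclusions are conjugation-invariant.
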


\begin{lemma} Identitfy $H$ with a subgroup of $G$ by the isomorphism $G\cong K\rtimes H$. Suppose $\alpha\in \Aut(G)$ with $\alpha(h)=\alpha_1(h)\alpha_2(h)$ where $\alpha_1(h)\in K$ and $\alpha_2(h)\in H$. Then the composition $\alpha_1\alpha_2^{-1}:H\rightarrow K$ is a crossed homomorphism and $\alpha_2\in \Aut(H)$. \end{lemma}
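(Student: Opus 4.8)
The plan is to establish the two claims separately, using that $K$ is characteristic in $G$ (the previous lemma) for the statement about $\alpha_2$, and the multiplicativity of $\alpha$ together with the uniqueness of the decomposition $G=K\rtimes H$ for the crossed-homomorphism claim. I will use throughout that every element of $G$ is uniquely a product $kh$ with $k\in K$ and $h\in H$, and that the group law reads $(k_1h_1)(k_2h_2)=k_1\,\phi(h_1)(k_2)\,h_1h_2$; equivalently, pushing an $H$-factor past a $K$-factor costs one application of the action, $hk=\phi(h)(k)\,h$.

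For $\alpha_2\in\Aut(H)$: since $K$ is characteristic, $\alpha$ descends to an automorphism $\bar\alpha$ of $G/K$, and $\pi_K$ identifies $G/K$ with $H$. First I would compute, for $h\in H$, that $\pi_K(\alpha(h))=\alpha_2(h)$, since $\alpha(h)=\alpha_1(h)\alpha_2(h)$ with $\alpha_1(h)\in K$. Under the identification $H\cong G/K$ sending $h\mapsto hK$, the automorphism $\bar\alpha$ corresponds exactly to the map $\alpha_2$; being conjugate to an automorphism, $\alpha_2$ is itself an automorphism of $H$. In particular $\alpha_2$ is a homomorphism, a fact I reuse below.

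For the cocycle claim, I would expand $\alpha(h_1h_2)=\alpha(h_1)\alpha(h_2)$ in both ways. Writing the right-hand side as $\alpha_1(h_1)\alpha_2(h_1)\alpha_1(h_2)\alpha_2(h_2)$ and pushing $\alpha_2(h_1)$ past $\alpha_1(h_2)$ via the action gives $\alpha_1(h_1)\,\phi(\alpha_2(h_1))(\alpha_1(h_2))\,\alpha_2(h_1)\alpha_2(h_2)$. Comparing $K$- and $H$-parts using uniqueness of the decomposition recovers that $\alpha_2$ is multiplicative and yields the twisted identity $\alpha_1(h_1h_2)=\alpha_1(h_1)\,\phi(\alpha_2(h_1))(\alpha_1(h_2))$; that is, $\alpha_1$ is a crossed homomorphism for the twisted action $h\mapsto\phi(\alpha_2(h))$ rather than for the given action $\phi$.

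Finally I would precompose with $\alpha_2^{-1}$ to untwist: setting $f=\alpha_1\alpha_2^{-1}$ and substituting $\alpha_2^{-1}(h_1),\alpha_2^{-1}(h_2)$ into the twisted identity collapses $\phi(\alpha_2(\alpha_2^{-1}(h_1)))$ to $\phi(h_1)$, giving the genuine cocycle relation $f(h_1h_2)=f(h_1)\,\phi(h_1)(f(h_2))$. The only real obstacle here is bookkeeping: keeping the semidirect-product multiplication, the side on which $\phi$ acts, and the ordering of the $K$- and $H$-factors consistent, and checking that the twist introduced by $\alpha_2$ is precisely what precomposition by $\alpha_2^{-1}$ removes. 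No deeper idea is needed beyond the uniqueness of the decomposition $G=K\rtimes H$.
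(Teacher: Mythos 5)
Your proof is correct and is precisely the ``simple exercise in semidirect products and facts about Frobenius groups'' that the paper leaves unproved: it gives no argument for this lemma at all. Your route --- using the previous lemma that $K$ is characteristic to descend $\alpha$ to $G/K\cong H$ (giving $\alpha_2\in\Aut(H)$), then comparing the $K$- and $H$-components of $\alpha(h_1h_2)=\alpha(h_1)\alpha(h_2)$ via uniqueness of the decomposition and precomposing with $\alpha_2^{-1}$ to remove the twist --- is the standard argument the author evidently intends, and the cocycle identity $f(h_1h_2)=f(h_1)\,\phi(h_1)(f(h_2))$ you obtain is exactly what is needed.
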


The proofs of these are a simple exercise in semidirect products and facts about Frobenius groups.

\begin{theorem}
Suppose $\sigma \in \Aut(K)$ and $\gamma\in \Aut(H)$, then there exists an $\alpha\in \Aut(G)$ with $\alpha|_K=\sigma$ and $\pi_K\alpha|_H=\gamma$ iff $\phi(h)^\sigma\phi(\gamma(h))^{-1}\in Inn(K)$ for every $h\in H$.
\end{theorem}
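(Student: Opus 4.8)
The plan is to prove both directions by translating the requirement that $\alpha$ be a homomorphism into a single compatibility condition on the conjugation relations defining the semidirect product. Write every element of $G$ uniquely as $kh$ with $k\in K$, $h\in H$, so the defining relations are $hkh^{-1}=\phi(h)(k)$, and recall from the preceding lemmas that $K$ is characteristic (so $\alpha|_K\in\Aut(K)$ automatically makes sense) and that, once $\pi_K\alpha|_H=\gamma$ is imposed, the $H$-part of $\alpha(h)$ is forced to be $\gamma(h)$. For $x\in K$ let $c_x\in\textnormal{Inn}(K)$ denote conjugation by $x$, and take $\phi(h)^\sigma$ to mean $\sigma\phi(h)\sigma^{-1}$.

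\textbf{Forward direction.} Given such an $\alpha$, write $\alpha(h)=\alpha_1(h)\gamma(h)$ with $\alpha_1(h)\in K$. Applying $\alpha$ to $hkh^{-1}=\phi(h)(k)$ and using $\alpha(k)=\sigma(k)$, the right-hand side becomes $\sigma(\phi(h)(k))$ while the left-hand side becomes $\alpha_1(h)\,\phi(\gamma(h))(\sigma(k))\,\alpha_1(h)^{-1}$. Equating these as automorphisms of $K$ for all $k$ gives $c_{\alpha_1(h)}\,\phi(\gamma(h))\,\sigma=\sigma\,\phi(h)$, hence $c_{\alpha_1(h)}=\phi(h)^\sigma\phi(\gamma(h))^{-1}$, which therefore lies in $\textnormal{Inn}(K)$. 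This is a direct calculation and I expect no difficulty here.

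\textbf{Backward direction.} This is the substantive part, and the place where the Frobenius hypothesis is essential. Define $\psi\colon H\to\Aut(K)$ by $\psi(h)=\phi(h)^\sigma\phi(\gamma(h))^{-1}$; by hypothesis $\psi$ lands in $\textnormal{Inn}(K)$. A short computation, using only that $\phi$ is a homomorphism so that the $\sigma$-conjugates and the $\phi(\gamma(\cdot))$ factors telescope, shows that $\psi$ is a crossed homomorphism into $\textnormal{Inn}(K)$ for the $H$-action through $\phi\circ\gamma$. I then want to lift $\psi$ along the central extension $1\to Z(K)\to K\xrightarrow{\mathrm{inn}}\textnormal{Inn}(K)\to 1$ to a crossed homomorphism $\alpha_1\colon H\to K$ with $c_{\alpha_1(h)}=\psi(h)$. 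Choosing any set-theoretic lift of $\psi$ produces a cochain valued in $Z(K)$ measuring the failure of the crossed-homomorphism identity; one checks this is a $2$-cocycle, and a lift exists precisely when its class in $H^2(H,Z(K))$ vanishes. Here is where being Frobenius pays off: the lemma $\#H\mid\#K-1$ forces $\gcd(\#H,\#K)=1$, hence $\gcd(\#H,\#Z(K))=1$, so $H^n(H,Z(K))=0$ for all $n\ge 1$ and the obstruction automatically vanishes. I expect this lifting step, namely verifying the cocycle identity and correctly invoking coprime-order cohomology vanishing, to be the main obstacle; the rest is comparatively routine.

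Finally I would define $\alpha(kh)=\sigma(k)\,\alpha_1(h)\,\gamma(h)$. Using that $\alpha_1$ is a crossed homomorphism together with $c_{\alpha_1(h)}=\psi(h)$, a bookkeeping computation with the multiplication rule $(k_1h_1)(k_2h_2)=k_1\phi(h_1)(k_2)\,h_1h_2$ shows $\alpha$ is a homomorphism; since $\alpha_1(1)=1$ we get $\alpha|_K=\sigma$ and $\pi_K\alpha|_H=\gamma$, and surjectivity, hence (as $G$ is finite) bijectivity, is immediate because $\alpha(K)=K$ and $\pi_K\alpha(H)=H$. This produces the required $\alpha\in\Aut(G)$ and completes the equivalence.
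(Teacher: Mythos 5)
Your proposal is correct and follows essentially the same route as the paper: the forward direction is the same direct computation, and the backward direction likewise twists $h\mapsto\phi(h)^\sigma\phi(\gamma(h))^{-1}$ into a $1$-cocycle valued in $\textnormal{Inn}(K)$ and lifts it through the central extension $1\to Z(K)\to K\to\textnormal{Inn}(K)\to 1$ using $\gcd(\#H,\#Z(K))=1$ from the Frobenius condition $\#H\mid\#K-1$. The only cosmetic difference is that you phrase the lift via the vanishing $H^2(H,Z(K))=0$ obstruction, whereas the paper invokes the induced isomorphism $H^1_{\phi\gamma}(H,K)\cong H^1(H,\textnormal{Inn}(K))$ and then adjusts the representative by a coboundary; these are the same cohomological fact.
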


\begin{proof}
$(\Rightarrow)$ Knowing that $\pi_K\alpha|_H=\gamma$, write $\alpha(h)=\alpha_1(h)\gamma(h)$ for some $\alpha_1(h)\in K$. Then it follows $\sigma(\phi(h)(k) )=\alpha_{\alpha_1(h)} \phi(\gamma(h))(\sigma(k))$ where $\alpha_k\in \Aut(K)$ is conjugation by $k\in K$. Solving for $\alpha_{\alpha_1(h)}$ concludes this direction.\\
\\
$(\Leftarrow)$ We have the following map contained in $Z^1(H,\text{Inn}(K))$ (the group of $1$-cocycles) under the action $h.\alpha_k=\alpha_k^{\phi(\gamma(h))}$:
\begin{align*}
\varphi(h)&=(\phi(h)^\sigma\phi(\gamma(h))^{-1})^{\phi(\gamma(h))^{-1}}
\end{align*}
The quotient map $q:K\rightarrow \text{Inn}(K)$ induces an isomorphism on cohomology $q^*:H_{\phi\gamma}^1(H,K)\rightarrow H^1(H,\text{Inn}(K))$, noting $\text{gcd}(\#H,\#Z(K))=1$. Choose a representative $\delta$ of $(q^*)^{-1}(\varphi)$.

By construction we see $\alpha_{\delta(h)}\sim \varphi(h)=\phi(\gamma(h))^{-1}\phi(h)^\sigma$, i.e. there exists an $k\in K/Z(K)\cong \text{Inn}(K)$ such that $\alpha_k\alpha_{\delta(h)}=\phi(\gamma(h))^{-1}\phi(h)^\sigma\alpha_{\phi(\gamma(h))(k)}$.  WLOG replace $\delta(h)$ by $k^{-1}\delta(h)[\phi(\gamma(h))(k)]$ (easily checked to also be a crossed homomorphism) so that we have equality. Then define $\beta(h)=\phi(\gamma(h))\delta(h)$ so that $\alpha_{\beta(h)}=(\alpha_{\delta(h)})^{\phi(\gamma(h))}=\phi(h)^\sigma\phi(\gamma(h))^{-1}$. Rearranging we see $\sigma( \phi(h)(k) )=[\phi(\gamma(h))(\sigma(k))]^{\beta(h)}$.

Define $\alpha:G\rightarrow G$ by $\alpha(kh)=\sigma(k)\beta(h)\gamma(h)$. The relationships proven above show $\alpha$ is a homomorphism, $\sigma,\gamma$ bijective show $\alpha$ is an automorphism.
\end{proof}

By examining the case when $\text{Inn}(K)=1$, we get a semidirect product identical to the one present in $\text{Hol}(K)$, i.e.

\begin{corollary}
If $K$ is abelian, then $\Aut(G)\cong K\rtimes N_{\Aut(K)}(\phi(H))\le \text{Hol}(K)$.
\end{corollary}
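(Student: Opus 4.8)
The plan is to extract the full structure of $\Aut(G)$ from the Theorem by first identifying which pairs $(\sigma,\gamma)$ actually occur and then measuring the remaining freedom coming from inner twists. Since $K$ is abelian we have $\text{Inn}(K)=1$, so the criterion of the Theorem collapses to the equality $\phi(h)^\sigma=\phi(\gamma(h))$ for all $h\in H$, equivalently $\sigma\phi(h)\sigma^{-1}=\phi(\gamma(h))$. Because the Frobenius complement acts faithfully, $\phi$ is injective onto $\phi(H)\le\Aut(K)$, so this says precisely that conjugation by $\sigma$ preserves $\phi(H)$ and that $\gamma=\phi^{-1}\circ c_\sigma\circ\phi$ is forced (writing $c_\sigma$ for conjugation by $\sigma$ within $\Aut(K)$). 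Hence the pairs realized by automorphisms are exactly those with $\sigma\in N_{\Aut(K)}(\phi(H))$, with $\gamma$ uniquely determined by $\sigma$.

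Next I would build the short exact sequence $1\to\ker\rho\to\Aut(G)\xrightarrow{\rho}N_{\Aut(K)}(\phi(H))\to1$, where $\rho(\alpha)=\alpha|_K$ is well defined since $K$ is characteristic (first Lemma of this section) and is surjective onto $N_{\Aut(K)}(\phi(H))$ by the previous paragraph. An element of $\ker\rho$ restricts to the identity on $K$, which forces $\gamma=\text{id}$, so by the decomposition Lemma it has the form $\alpha(h)=f(h)h$ with $f\in Z^1(H,K)$. The arithmetic constraint $\#H\mid\#K-1$ gives $\gcd(\#H,\#K)=1$ and hence $H^1(H,K)=0$, while the fixed-point-free action of the complement gives $K^H=1$, so that $B^1(H,K)\cong K$. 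Therefore $\ker\rho\cong K$, and since $Z(G)=1$ for a Frobenius group these are exactly the inner automorphisms $c_{k_0}$ with $k_0\in K$.

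Finally I would split the sequence using the section $\sigma\mapsto\alpha_\sigma$, where $\alpha_\sigma(kh)=\sigma(k)\gamma(h)$. Checking that $\alpha_\sigma$ is a homomorphism of $G$ uses exactly the normalizing relation $\sigma\phi(h)\sigma^{-1}=\phi(\gamma(h))$, and $\sigma\mapsto\alpha_\sigma$ is itself a homomorphism because $c_\sigma$ is multiplicative in $\sigma$ forces $\gamma_{\sigma\sigma'}=\gamma_\sigma\gamma_{\sigma'}$. This yields $\Aut(G)\cong K\rtimes N_{\Aut(K)}(\phi(H))$. To see the action is the holomorph action, I note that for $k_0\in K$ one has $\alpha_\sigma c_{k_0}\alpha_\sigma^{-1}=c_{\alpha_\sigma(k_0)}=c_{\sigma(k_0)}$, which is precisely the image of $k_0$ under $\sigma$ inside $\text{Hol}(K)=K\rtimes\Aut(K)$; thus the copy of $K\rtimes N_{\Aut(K)}(\phi(H))$ embeds in $\text{Hol}(K)$ as claimed.

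I expect the main obstacle to be the precise computation of $\ker\rho$: one must combine both defining features of a Frobenius group, namely coprimality of $\#H$ and $\#K$ (to kill $H^1$) and the fixed-point-free action (to identify $B^1(H,K)$ with $K$), in order to conclude that the translation freedom is exactly a copy of $K$ and no larger. Matching this $K$ with the inner automorphisms from the kernel and verifying that $N_{\Aut(K)}(\phi(H))$ acts on it by the natural holomorph action is then the direct conjugation computation displayed above.
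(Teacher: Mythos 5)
Your argument is correct and follows the same route the paper intends: specialize the Theorem to $\text{Inn}(K)=1$ to see that the restrictions $\alpha|_K$ realized are exactly $N_{\Aut(K)}(\phi(H))$ with $\gamma$ forced, then identify the kernel of restriction with $K$ and split. The only thing you add beyond the paper's one-line justification is the explicit cohomological computation of the kernel ($H^1(H,K)=0$ by coprimality, $B^1(H,K)\cong K$ by fixed-point-freeness), which is a correct and welcome filling-in of a detail the paper leaves implicit via its crossed-homomorphism lemma.
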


Coming back to $G(q,d)$, we then necessarily have $\Aut(G(q,d))$ embeds in $\text{Hol}(C_p^n)$. In particular we can phrase this in terms of matrices:

\begin{lemma}
Let $x_d\in\F_q^\times$ be a primitive element of order $d$. $x_d$ acts on $\F_q^+\cong C_p^n$ by multiplication bijectively, so identify $x_d$ with its matrix $X_d\in GL_n(\F_p)=\Aut(C_p^n)$. Then
\begin{align*}
\text{Hol}(C_p^n)&\cong \left\{ \left(\begin{matrix} A & b \\ 0 & 1 \\ \end{matrix}\right)\in GL_{n+1}(\F_p) : A\in GL_n(\F_p), b\in C_p^n \right\}\\
G(q,d)&\cong \left\{ \left(\begin{matrix} X_d^k & b \\ 0 & 1 \\ \end{matrix}\right)\in GL_{n+1}(\F_p) : 0\le k <d, b\in C_p^n \right\}
\end{align*}
Are groups of block upper triangular matrices, with an $n\times n$ and  a $1\times 1$ block on the diagonal.
\end{lemma}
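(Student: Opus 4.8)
The plan is to write down the affine action explicitly and then realize $G(q,d)$ as a subgroup. By definition $\text{Hol}(C_p^n) = C_p^n \rtimes \Aut(C_p^n)$, and since $C_p^n$ is an $n$-dimensional $\F_p$-vector space we have $\Aut(C_p^n) = GL_n(\F_p)$. Thus an element of $\text{Hol}(C_p^n)$ is a pair $(A,b)$ with $A \in GL_n(\F_p)$, $b \in C_p^n$, acting on $v$ as the affine map $v \mapsto Av + b$. First I would define the candidate map $\Phi$ sending $(A,b)$ to the block matrix $\left(\begin{smallmatrix} A & b \\ 0 & 1 \end{smallmatrix}\right) \in GL_{n+1}(\F_p)$, using the standard homogenization $v \mapsto \binom{v}{1}$ of $C_p^n$ into $C_p^{n+1}$, under which left multiplication by this block matrix sends $\binom{v}{1}$ to $\binom{Av+b}{1}$.

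To see $\Phi$ is an isomorphism onto the stated set, I would verify the homomorphism property by a direct product computation,
\[
\Phi(A_2,b_2)\,\Phi(A_1,b_1) = \left(\begin{matrix} A_2 & b_2 \\ 0 & 1 \end{matrix}\right)\left(\begin{matrix} A_1 & b_1 \\ 0 & 1 \end{matrix}\right) = \left(\begin{matrix} A_2A_1 & A_2 b_1 + b_2 \\ 0 & 1 \end{matrix}\right),
\]
and note that the composite affine map $(A_2,b_2)\circ(A_1,b_1)$ is exactly $(A_2 A_1,\, A_2 b_1 + b_2)$, matching the product. The map $\Phi$ is injective because $A$ and $b$ are read off directly from the blocks, and its image is visibly the full collection of block upper triangular matrices with bottom-right entry $1$, giving the first isomorphism.

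For $G(q,d)$ I would first observe that $\{a \in \F_q^\times : a^d = 1\}$ is the unique cyclic subgroup of order $d$ in the cyclic group $\F_q^\times$ of order $q-1$ (which exists precisely because $d \mid q-1$), generated by $x_d$; hence every such scalar equals $x_d^k$ for a unique $0 \le k < d$. Next, multiplication by a fixed $a \in \F_q$ is $\F_p$-linear on $\F_q^+ \cong C_p^n$, so it corresponds to an element of $GL_n(\F_p)$, and since multiplication by $x_d^k$ is the $k$-fold composite of multiplication by $x_d$, its matrix is $X_d^k$. Therefore the affine map $x \mapsto x_d^k x + b$ is the pair $(X_d^k, b)$, and $\Phi$ carries $G(q,d)$ onto exactly the described subgroup, which sits inside $\text{Hol}(C_p^n)$ as claimed.

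There is no genuine obstacle here; the content is bookkeeping. The one step deserving care is the homomorphism check, where one must confirm the off-diagonal block of a product transforms as $A_2 b_1 + b_2$ — that is, that the matrix convention agrees with the semidirect-product (affine composition) convention rather than its opposite. The only other point to record is that multiplication by $x_d$ is a faithful $\F_p$-linear map, so $X_d$ genuinely has order $d$ and the indices $0 \le k < d$ enumerate $G(q,d)$ without repetition.
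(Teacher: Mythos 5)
Your proof is correct and is the standard argument; the paper itself states this lemma without proof, treating it as routine bookkeeping, and your homogenization $v\mapsto\binom{v}{1}$ with the verification that the block-matrix product matches affine composition $(A_2,b_2)\circ(A_1,b_1)=(A_2A_1,\,A_2b_1+b_2)$ is exactly the intended justification. Your added care about the faithfulness of multiplication by $x_d$ (so that $X_d$ has order exactly $d$ and the exponents $0\le k<d$ enumerate $G(q,d)$ without repetition) is a point the paper leaves implicit.
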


Applying the definition of GI-automorphism to matrix operations then gives the following:

\begin{lemma}
$G(q,d)$ has a GI-automorphism $\left(\begin{smallmatrix} T & a \\ 0 & 1\end{smallmatrix}\right)$ iff $G(q,d)$ is generated by elements of the form $\left(\begin{smallmatrix} X_d^k & b \\ 0 & 1\end{smallmatrix}\right)$ with $TX_d^kT=X_d^{-k}$ and $TX_d^ka+Tb+a= -X_d^{-k}b$.
\end{lemma}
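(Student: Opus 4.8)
The plan is to realize every automorphism of $G=G(q,d)$ as conjugation inside $\text{Hol}(C_p^n)$ and then read off the generator-inverting condition by block matrix multiplication.

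First I would record that the preceding corollary identifies $\Aut(G)$ with $N_{\text{Hol}(C_p^n)}(G)=C_p^n\rtimes N_{GL_n(\F_p)}(\langle X_d\rangle)$, and that this identification is implemented by honest conjugation: the centralizer $C_{\text{Hol}(C_p^n)}(G)$ is trivial because the Frobenius action is fixed-point free, so $X_d^k-I$ is invertible for $0<k<d$, forcing any holomorph element commuting with all of $G$ to be the identity. Consequently a GI-automorphism of $G$ is conjugation by some $M=\left(\begin{smallmatrix} T & a\\ 0 & 1\end{smallmatrix}\right)$ with $T\in N_{GL_n(\F_p)}(\langle X_d\rangle)$. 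Because a generator-inverting automorphism automatically has order $2$ (if $\sigma(g)=g^{-1}$ then $\sigma^2(g)=g$, and such $g$ generate), faithfulness of the conjugation map $M\mapsto \text{conj}_M|_G$ forces $M^2=I$, i.e. $T^2=I$ and $Ta=-a$.

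Next I would compute directly. For $g=\left(\begin{smallmatrix} X_d^k & b\\ 0 & 1\end{smallmatrix}\right)$ one has $g^{-1}=\left(\begin{smallmatrix} X_d^{-k} & -X_d^{-k}b\\ 0 & 1\end{smallmatrix}\right)$ and, multiplying out the blocks, $MgM^{-1}=\left(\begin{smallmatrix} TX_d^kT^{-1} & -TX_d^kT^{-1}a+Tb+a\\ 0 & 1\end{smallmatrix}\right)$. Equating $MgM^{-1}=g^{-1}$ and comparing the diagonal blocks gives $TX_d^kT^{-1}=X_d^{-k}$, which is $TX_d^kT=X_d^{-k}$ once $T^{-1}=T$ is used; comparing the upper-right blocks gives $-TX_d^kT^{-1}a+Tb+a=-X_d^{-k}b$, and substituting $T^{-1}a=Ta=-a$ turns $-TX_d^kT^{-1}a$ into $TX_d^ka$, yielding $TX_d^ka+Tb+a=-X_d^{-k}b$. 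Thus, for the fixed involution $M$, the element $g$ is inverted by the corresponding automorphism precisely when the two displayed conditions hold.

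Finally I would assemble the equivalence. In the forward direction, a GI-automorphism inverts a generating set of $G$, every element of which is of the stated form and satisfies the two conditions, so $G$ is generated by such elements. Conversely, if the elements $\left(\begin{smallmatrix} X_d^k & b\\ 0 & 1\end{smallmatrix}\right)$ satisfying the two conditions generate $G$, then conjugation by $M$ sends each of them to its inverse; since these generate $G$ it preserves $G$ and hence defines an automorphism, this automorphism squares to the identity because it inverts a generating set, and it is therefore a GI-automorphism by definition. The only genuinely delicate point is the first paragraph: one must know that the abstract isomorphism of the corollary is realized by honest conjugation in the holomorph (equivalently that $G$ is self-centralizing there), since this is what converts the order-$2$ property of a GI-automorphism into the matrix identities $T^2=I$ and $Ta=-a$ that make the two forms of the conditions agree. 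Once that is in hand, everything else is routine block arithmetic.
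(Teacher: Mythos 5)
Your proof is correct and is exactly the direct block-matrix computation the paper intends (the paper states this lemma without proof, merely as a consequence of ``applying the definition of GI-automorphism to matrix operations''); indeed you are more careful than the source, since the displayed conditions are literally $MgM=g^{-1}$ for $M=\left(\begin{smallmatrix} T & a\\ 0 & 1\end{smallmatrix}\right)$ and hence tacitly assume $M^2=I$, which you rightly justify via the order-$2$ property of a GI-automorphism together with the triviality of the centralizer of $G(q,d)$ in $\text{Hol}(C_p^n)$. The only nit is in the converse direction: there you should first establish $T^2=I$ and $Ta=-a$ (note these are exactly the stated conditions applied to the identity element $k=0$, $b=0$) before asserting that conjugation by $M$ inverts the given generators, since without $M^2=I$ the relation $MgM=g^{-1}$ encoded by the two matrix identities is not the same as $MgM^{-1}=g^{-1}$.
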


We conclude this section with a complete classification of GI-extensions of members of this family, first separating out those without a GI-extension and then counting the number of GI-extensions for the remaining groups.

\begin{theorem}
$G(q,d)$ has a GI-automorphism iff $\exists l$ such that $p^l\equiv -1\mod d$.
\end{theorem}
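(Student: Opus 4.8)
The plan is to translate the existence of a GI-automorphism into a single conjugacy question inside $GL_n(\F_p)=\Aut(C_p^n)$, and then settle that question by a Galois-orbit computation. Throughout let $m=\mathrm{ord}_d(p)$ be the multiplicative order of $p$ modulo $d$, so that $\F_p(x_d)=\F_{p^m}$ and $m\mid n$. The central observation I will exploit is that the $\F_p$-linear maps ``multiplication by $x_d$'' and ``multiplication by $x_d^{-1}$'' on $\F_q$ are conjugate in $GL_n(\F_p)$ if and only if they share the same minimal polynomial over $\F_p$, and that minimal polynomial is $\prod_i\bigl(X-x_d^{p^i}\bigr)$; hence conjugacy is equivalent to $x_d^{-1}$ being a Galois conjugate of $x_d$, i.e. to $p^l\equiv-1\pmod d$ for some $l$.

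For necessity, suppose $\sigma$ is a GI-automorphism of $G=G(q,d)$. Since $K\cong C_p^n$ is characteristic, $\sigma$ descends to $\bar\sigma\in\Aut(H)$ with $H\cong C_d$, and the generator-inverting property descends as well: $H$ is generated by the $\bar\sigma$-inverted elements. As $\Aut(C_d)$ is abelian, $\bar\sigma$ is multiplication by some $e$ with $\gcd(e,d)=1$, and its inverted elements form the subgroup $\{x:(e+1)x=0\}$ of order $\gcd(e+1,d)$; for this to be all of $C_d$ we need $d\mid e+1$, i.e. $\bar\sigma$ is inversion. Writing $T=\sigma|_K\in GL_n(\F_p)$ and using that $\sigma$ respects the conjugation action $\phi$ of $H$ on $K$, we get $T\phi(h)T^{-1}=\phi(\bar\sigma(h))=\phi(h)^{-1}$, so in particular $TX_dT^{-1}=X_d^{-1}$. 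By the observation above this forces $x_d^{-1}=x_d^{p^l}$ for some $l$, i.e. $p^l\equiv-1\pmod d$.

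For sufficiency, suppose $p^l\equiv-1\pmod d$. Then $x_d^{-1}=x_d^{p^l}$ is Galois conjugate to $x_d$, so a conjugator exists; a first candidate is $T_0=\mathrm{Frob}_{p^l}$, which is $\F_p$-linear on $\F_q$, normalizes $\langle X_d\rangle$, and satisfies $T_0X_dT_0^{-1}=X_d^{p^l}=X_d^{-1}$. By the corollary identifying $\Aut(G(q,d))$ with a subgroup of $\mathrm{Hol}(C_p^n)$, the data $(T,a)$ then determine a genuine $\sigma\in\Aut(G)$ inducing inversion on $H$, and the origin rotation $x\mapsto x_dx$ is visibly inverted by $\sigma$, which already recovers the $C_d$-part of the generator-inverting condition.

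The main obstacle is the remaining half: producing $\sigma$-inverted \emph{translations} that, together with the origin rotations, generate all of $K\cong\F_q$. Because the rotations scale a translation by $\mu_d$, it suffices to show the inverted translations span $\F_q$ as a module over $\F_p[x_d]=\F_{p^m}$, and plain Frobenius can fail this (its relevant eigenspace may be too small). I would therefore adjust $T$ within the coset $T_0\cdot C_{GL_n(\F_p)}(X_d)$, where $C_{GL_n(\F_p)}(X_d)\cong GL_{n/m}(\F_{p^m})$, to make $T$ an \emph{involution} still satisfying $TX_dT^{-1}=X_d^{-1}$; the required centralizer element solves a twisted norm equation $\mathrm{Frob}_{p^l}(z)\,z=(T_0^2)^{-1}$ in $GL_{n/m}(\F_{p^m})$, which is solvable by a Hilbert 90 / Lang argument (note $T_0^2=\mathrm{Frob}_{p^{2l}}$ centralizes $X_d$). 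With $T^2=I$, the criterion $TX_d^kT=X_d^{-k}$ holds for every $k$, and condition $TX_d^ka+Tb+a=-X_d^{-k}b$ with $k=0$ exhibits the inverted translations as a coset of the $(-1)$-eigenspace $V=\ker(T+I)$. Regarding $T$ as an $\F_{p^m}$-semilinear involution (it twists $x_d\mapsto x_d^{-1}$), a descent argument should give $\F_{p^m}\cdot V=\F_q$, whence the inverted elements generate $G$. Verifying this spanning — equivalently, that the relevant eigenspace of the semilinear involution has full rank — is the step I expect to require the most care; the rest is bookkeeping with the two matrix conditions.
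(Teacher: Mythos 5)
Your necessity direction is complete and correct, and it runs on a slightly different engine than the paper's: you reduce to $TX_dT^{-1}=X_d^{-1}$ via the quotient $G\to H\cong C_d$ (using that $K$ is characteristic and that $K$ abelian kills the inner-automorphism ambiguity), and then invoke the rational canonical form: multiplication by $x_d$ and by $x_d^{-1}$ are semisimple with characteristic polynomial $\bigl(\text{min poly}\bigr)^{n/m}$, so conjugacy in $GL_n(\F_p)$ forces $x_d^{-1}$ to be a Frobenius conjugate of $x_d$. The paper instead observes that conjugation by $T$ is a ring automorphism of the embedded copy of $\F_p(x_d)$ and hence a power of Frobenius; the two arguments are essentially equivalent in content and either is acceptable.

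The sufficiency direction is where you have a genuine gap, and it sits exactly at the heart of the matter. Having produced an involution $T$ with $TX_dT^{-1}=X_d^{-1}$ (your Lang-type twisted norm equation is solvable, though note the paper sidesteps this entirely: the field involution $\phi_p^{g/2}$ of $\F_p(x_d)$, extended block-diagonally along a basis of $\F_q/\F_p(x_d)$, is already an involution normalizing $\langle X_d\rangle$ correctly), everything reduces to showing that the inverted elements \emph{generate} $G(q,d)$, i.e.\ that $\F_p[x_d]\cdot\ker(T+I)=\F_q$. You explicitly defer this ("a descent argument should give", "the step I expect to require the most care"), but this spanning statement \emph{is} the generated-by-involutions condition; without it you have only produced an order-$2$ automorphism inverting $X_d$, not a GI-automorphism. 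The claim is true and your framework can close it: for $p$ odd pick $\lambda\in\F_{p^m}$ with $\lambda^{p^{m/2}}=-\lambda$ (such $\lambda$ exists since $-1$ lies in the index-$(p^{m/2}-1)$ subgroup of $\F_{p^m}^\times$); then $\tau$-semilinearity gives $\lambda\cdot\ker(T-I)=\ker(T+I)$, so the two eigenspaces have equal dimension and $\ker(T+I)+\lambda\ker(T+I)=\F_q$; for $p=2$ one needs instead a genuine semilinear descent (Speiser/Hilbert 90) since $T+I=T-I$ and $T$ need not be diagonalizable. The paper avoids this entire discussion by exhibiting concrete inverted elements: besides the pure rotations, the elements with translation part $b=x_d^k-1$ are inverted, and the identity $\sum_{k=1}^{d-1}(x_d^k-1)=-d$ together with $\gcd(d,p)=1$ shows these span $\F_p(x_d)$ (and then $\F_q$ after tensoring along the basis $\{v_i\}$). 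So: right architecture, correct easy half, but the decisive step of the hard half is asserted rather than proved.
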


\begin{proof}
$(\Rightarrow)$ We then necessarily have $TX_d^k T=X_d^{-k}$. $T$ acts by conjugation on the image of $\F_p(x_d)\le \F_q$ embedded in $GL_n(\F_p)\cup\{0\}$, so without loss of generality $T$ acts on $\F_p(x_d)$. Conjugation of matrices is a ring automorphism, so $T$ acts on $\F_p(x_d)$ by some power of Frobenius $\phi_p$. Thus $x_d^{-1}=x_d^T=\phi_p^{l}(x_d)$ for some $l$, so that $p^l\equiv -1\mod d$.\\
\\
$(\Leftarrow)$ $2\mid [\F_p(x_d):\F_p]$ by the theory of cyclotomic fields. Let $\alpha$ be the involuting automorphism of $\F_p(x_d)$, and $T$ the corresponding matrix in $GL_d(\F_p)$. If $p$ has order $g$ modulo $d$ then $\alpha=\phi_p^{g/2}$, in particular noting $\alpha(x_d)=x_d^{p^{g/2}}=x_d^{-1}$.\\
\\
First case, suppose $\F_q=\F_p(x_d)$. Consider the matrix $\left(\begin{smallmatrix} T & 0\\0&1\end{smallmatrix}\right)$ acting on $G(q,d)$ by conjugation. It suffices to show (by lemma 3.5) that $G(q,d)$ is generated by elements of the form $\left(\begin{smallmatrix} X_d^k & b\\0&1\end{smallmatrix}\right)$ such that $TX_d^kT=X_d^{-k}$ and $Tb=-X_d^{-k}b$. The first equality holds by consturction. The second holds for $b=0$ and $b=x_d^k-1$. These elements generate the following:
\begin{align*}
\left(\begin{matrix} X_d^k & X_d^k-1\\0&1\end{matrix}\right)\left(\begin{matrix} X_d^{-k} & 0\\0&1\end{matrix}\right) &=\left(\begin{matrix} 1 & X_d^k-1\\0&1\end{matrix}\right)
\end{align*}
To show that these matrices, with $\left(\begin{smallmatrix} X_d^k & 0\\0&1\end{smallmatrix}\right)$, generate $G(q,d)$ it suffices to show $x_d^k-1$ spans $\F_p(x_d)$. But notice $-d=\sum_{k=1}^{d-1} x_d^k-1$ and $\text{gcd}(d,p)=1$. Thus $\F_p$ is in the span, and consequently so is $x_d^k$ for any $k$.\\
\\
For the case where $[\F_q:\F_p(x_d)]>1$, let $r=[\F_p(x_d):\F_p]$ and $T$ as above. Extend a basis $\{v_i\}$ of $\F_q/\F_p(x_d)$ by a basis $\{w_j\}$ of $\F_p(x_d)/\F_p$ to a basis $\{v_iw_j\}$ of $\F_q/\F_p$ ordered lexicographically. Define $\widetilde{T}\in GL_n(\F_p)$ to be a block diagonal matrix with $T$'s along the diagonal and consider $\left(\begin{smallmatrix} \widetilde{T} & 0\\0&1\end{smallmatrix}\right)$. It then follows all elements of the form $\left(\begin{smallmatrix}X_d^k & bv_i\\0&1\end{smallmatrix}\right)$ satisfy the equation in lemma 3.5 whenever $\left(\begin{smallmatrix}X_d^k & b\\0&1\end{smallmatrix}\right)$ does in the first case, and so generates $\F_q$ by the first case.
\end{proof}

\begin{theorem}
$G(q,d)$ has at most one GI-extension.
\end{theorem}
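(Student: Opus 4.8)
The plan is to run the statement through the bijection of Corollary 2.1, which reduces it to showing that all GI-automorphisms of $G=G(q,d)$ fall into a single conjugacy class in $\textnormal{Out}(G)$; if no GI-automorphism exists there is nothing to prove, so I assume one does. By the corollary to Theorem 3.1 (with $K=C_p^n$ abelian), together with the fact that a nontrivial Frobenius group has trivial center, one has $\Aut(G)\cong C_p^n\rtimes N$ and $\textnormal{Inn}(G)\cong C_p^n\rtimes\langle X_d\rangle$, where $N=N_{GL_n(\F_p)}(\langle X_d\rangle)$; hence $\textnormal{Out}(G)\cong N/\langle X_d\rangle$. The translation part $a$ of an automorphism lies in the inner factor $C_p^n$, so the outer class of a GI-automorphism is recorded precisely by its restriction $T=\sigma|_K\in N$ modulo $\langle X_d\rangle$.

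Next I would pin down the possible $T$. By Lemma 3.5 the generation requirement forces the inverted elements to surject onto $C_d$, so $T$ must invert $X_d$, i.e. $TX_dT^{-1}=X_d^{-1}$. Since $x_d$ generates the subfield $\F_p(x_d)=\F_{p^g}$ (with $g=\textnormal{ord}_d(p)$; the existence of a GI-automorphism gives $-1\in\langle p\rangle\subseteq(\Z/d)^\times$, so $g$ is even), its centralizer is $C:=C_{GL_n(\F_p)}(X_d)\cong GL_{n/g}(\F_{p^g})$. If $T_1,T_2$ both invert $X_d$ then $T_2^{-1}T_1\in C$, so the inverting elements form a single coset $T_0C$, where I take $T_0$ to be the $\F_p$-linear involution induced by the field automorphism $\phi_p^{g/2}$ (extended block-diagonally as in Theorem 3.1, so that $T_0^2=1$); this $T_0$ is the GI-automorphism produced in Theorem 3.1. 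Thus every GI-automorphism has outer class $\overline{T_0c}$ for some $c\in C$, and the task becomes showing these all lie in the conjugacy class of $\overline{T_0}$.

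The engine for this is a twisted-conjugacy computation. Conjugating $\overline{T_0}$ by $c\in C$ gives $\overline{(c\,\tau(c)^{-1})T_0}$, where $\tau(x)=T_0xT_0^{-1}$ is the order-two automorphism of $C\cong GL_{n/g}(\F_{p^g})$ induced by the quadratic field automorphism $\phi_p^{g/2}$ of $\F_{p^g}$, possibly composed with an inner twist that does not affect the triviality of $H^1$ for $GL$. By Hilbert's Theorem 90 for $GL_{n/g}$ over the extension $\F_{p^g}/\F_{p^{g/2}}$, every $y$ with $\tau(y)=y^{-1}$ is of the form $c\,\tau(c)^{-1}$; consequently the orbit of $\overline{T_0}$ under conjugation by $C$ is exactly $\{\overline{T_0c}:\tau(c)=c^{-1}\ (\mathrm{mod}\ \langle X_d\rangle)\}$, and in particular the full conjugacy class of $\overline{T_0}$ contains every such $\overline{T_0c}$. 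It therefore suffices to prove that the GI generation condition forces $c$ into this norm-trivial coset, i.e. $c\,\tau(c)\in\langle X_d\rangle$.

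This last implication is the main obstacle. I would translate the second equation of Lemma 3.5, $(T+X_d^{-k})b=-(I+TX_d^k)a$, into a description of the $K$-parts $b$ of the inverted elements for each $k$, and then impose that the $\F_p[\langle X_d\rangle]$-module they generate is all of $\F_q$. Viewing $T=T_0c$ as a $\phi_p^{g/2}$-semilinear involution of $\F_q$, the available directions $\ker(T+X_d^{-k})$ are governed by the eigenstructure of this semilinear map, and I expect the spanning to hold precisely when $c$ is norm-trivial in the sense above, with generation failing otherwise; one verifies this directly in the rank-one case $\F_q=\F_p(x_d)$, where $N$ is the semilinear group $\Gamma L_1(\F_q)$ and the non-admissible cosets correspond to non-square twists. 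The general case would then descend from the block-diagonal reduction already used in Theorem 3.1. Establishing this equivalence between ``the inverted elements span $\F_q$'' and ``$c$ is norm-trivial'' is the crux; granting it, every GI-automorphism is conjugate to $\overline{T_0}$ and so $G(q,d)$ has at most one GI-extension.
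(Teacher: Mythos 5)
Your overall strategy --- reduce via the bijection with conjugacy classes in $\textnormal{Out}(G)$, decompose the normalizer $N$ of $\langle X_d\rangle$ using the centralizer $C\cong GL_{n/g}(\F_{p^g})$ and the Frobenii, and then invoke Hilbert's Theorem 90 for $GL$ over $\F_{p^g}/\F_{p^{g/2}}$ to collapse the twisted-conjugacy classes --- is genuinely parallel to the paper's proof, which uses the same decomposition $N(C_d)\cong B(C_d)\rtimes A(C_d)$ and the same generalization of Hilbert 90 (cited to Serre) to show that the eigenvalue data of the $\F_{p^g}$-linear part can be normalized to a single class. But as written your argument has a real gap at exactly the point you flag: the claim that the generation condition of Lemma 3.5 forces $c$ into the norm-trivial coset is never established. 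You say you ``expect'' the spanning of the inverted elements to be equivalent to norm-triviality and propose to verify it by analyzing $\ker(T+X_d^{-k})$ in the rank-one case and then descending; none of that analysis is carried out, and without it the conclusion does not follow. A reader cannot check a step that is only predicted to work.

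The irony is that the missing step is much easier than the route you sketch. If $\sigma$ is a GI-automorphism, then $\sigma$ inverts each element of a generating set $S$, so $\sigma^2$ fixes every element of $S$ and hence $\sigma^2=1$ in $\Aut(G)$. Writing the $T$-part as $T=yT_0$ with $y\in C$ and $T_0^2=I$, this gives $T^2=y\,\tau(y)=I$, which is precisely the norm-triviality you need; Hilbert 90 then puts every such $T$ in the single twisted-conjugacy orbit of $T_0$, and the translation part is absorbed into inner automorphisms as you note. With that observation inserted, your proof closes and is arguably cleaner than the paper's, which instead extracts from the generation condition that the $\F_p(x_d)$-linear part of $T$ is diagonalizable with eigenvalues $-x_d^{k_i}$ and then uses Hilbert 90 to show all eigenvalue sequences $\{k_i\}$ are equivalent under the three listed relations. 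As submitted, however, the proposal is incomplete: the load-bearing equivalence is asserted, not proved, and the proof sketch you offer for it points at a harder computation than is actually required.
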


Retain all notation from the previous theorem

\begin{proof}
It suffices to count GI-automorphisms up to inner automorphism and conjugation.\\
\\
We know that $\Aut(G(q,d))\cong C_p^n\rtimes N_{\Aut(C_p^n)}(C_d)\le \text{Hol}(C_p^n)$. Fix a basis for $\F_q/\F_p$ $\{w_jv_i\}$ as in the previous theorem, where $\{v_i\}$ is a basis for $\F_q/\F_p(x_d)$ and $\{w_j\}$ is a basis for $\F_p(x_d)/\F_p$. Then automorphisms of $G(q,d)$ are represented by matrices $\left(\begin{smallmatrix} T& a\\0&1\end{smallmatrix}\right)$ with $a\in \F_q$ and $T\in N_{\text{GL}_n(\F_p)}(C_d)$.\\
\\
Notice how any matrix $S\in N(C_d)$ must satisfy $S(xy)=S(x)S(y)$ for any $x\in \F_p(x_d)$ and $y\in \F_q$. Thus $S|_{\F_p(x_d)}\in \Aut(\F_p(x_d))$ i.e. is some power of Frobenius $\phi_p^m$. Define $P_S$ to be the linear map sending $w_jv_i\mapsto w_jS(v_i)$. Then $SP_S^{-1}$ is the block diagonal matrix with blocks given by fixing $i$ in the basis $\{w_jv_i\}$ as $\phi_p^m$. Note that $P_S$ is an $\F_p(x_d)$-linear map, showing that $N(C_d)\cong A(C_d)B(C_d)$ where $A(C_d)\cong \Gal(\F_p(x_d)/\F_p)$ is the group of block diagonal powers of Frobenius and $B(C_d)\cong \text{GL}(\F_q/\F_p[x_d])$ is the group of $\F_p[x_d]$-linear maps. As a matter of fact, $B(C_d)$ is a normal subgroup giving $N(C_d)\cong B(C_d)\rtimes A(C_d)$ where elements of $A(C_d)$ act on $B(C_d)$ by applying the corresponding power of Frobenius to the coordinates of a matrix in $A(C_d)$. Defining and showing $B(C_d)$ is a normal subgroup can of course be done basis free. Choosing a section for the semidirect product is then equivalent to the choice of basis $\{v_i\}$.\\
\\
Up to inner automorphism, notice that any automorphism
\begin{align*}
\left(\begin{smallmatrix} T& a\\0&1\end{smallmatrix}\right)&=\left(\begin{smallmatrix} T& 0\\0&1\end{smallmatrix}\right)\left(\begin{smallmatrix} I& T^{-1}a\\0&1\end{smallmatrix}\right)
\end{align*}
So it suffices to consider automorphisms where $a=0$.\\
\\
Given a GI-automorphism $\sigma=\left(\begin{smallmatrix} T& a\\0&1\end{smallmatrix}\right)$, we must have $T \equiv \phi_p^l\mod B(C_d)$ where $p^l\equiv -1 \mod d$. Up to inner automorphism we have $T$ defined upto $X_d^k$. Thus, upto conjugation we have $SX_d^kTS^{-1}=X_d^{p^mk}STS^{-1}$ where $S\equiv \phi_p^m\mod B(C_d)$ showing these two relations commute (as $p$ is invertible mod $d$). It then suffices to count equivalence classes of GI-automorphisms under the composite relation.\\
\\
Now consider the set
\begin{align*}
M_T=\left\{b:\exists k \text{ s.t. } \left(\begin{smallmatrix} X_d^k& b\\0&1\end{smallmatrix}\right)\text{ is inverted by }\sigma\right\}
\end{align*}
If $T$ comes from a GI-automorphism this must contain an $\F_q/\F_p(x_d)$ basis, in other words there exists a basis such that $T=B\phi_p^l$ under the semiproduct decomposition given above with $B$ diagonalizable with eigenvalues $-x_d^k$ for some values of $k$. Instead of choose a basis, this is equivalent to choosing a change of basis matrix $P$, such that $T$ decomposes into $T=(BP^{\phi_p^l}P^{-1})(P\phi_p^lP^{-1})$ where $BP^{\phi_p^l}P^{-1}$ is diagonalizable with eigenvalues $-x_d^k$ for some values of $k$. Here, $S\phi_p^lS^{-1}$ is the block diagonal Frobenius map in the new basis. So, in order to classify $\sigma$ upto inner automorphism and upto conjugation it suffices to count equivalence classes of the sequence $\{k_i\}$ of eigenvalue exponents of $B$ upto the following relations:
\begin{itemize}
\item{$k_i\equiv k'_i+\lambda \mod d$ for all $i$, given by inner automorphisms}
\item{$k_i\equiv p^mk'_i \mod d$ for all $i$, given by conjugation}
\item{Changing $B$ to $BP^{\phi_p^l}P^{-1}$, by different choice of basis}
\end{itemize}
The first two relations are clear from computation, and easy to work with. As for the last relation, a generalization of Hilbert's Theorem 90 [S, p.150-151] shows that every matrix $U$ with $U^{\phi_p^l}=U^{-1}$ is of the form $P^{\phi_p^l}P^{-1}$. Suppose $B$ has eigenvalues $-x_d^{k_i}$ with a corresponding basis of eigenvectors $v_i$. Then define $U$ to be the matrix such that $Uv_i=x_d^{m_i}v_i$. Clearly, $U^{\phi_p^l}=U^{-1}$, so there is a change of basis matrix $P$ such that $T=BU(P\phi_p^lP^{-1})$, and $BU$ is a matrix with eigenvalues $-x_d^{k_i+m_i}$ by construction. Thus up to equivalence there is only one such sequence $\{k_i\}$, implying there is exactly one GI-extension.
\end{proof}

\section{Unramified Quaternion Extensions}

The goal of this section is to take the classification of unramified $Q_8$ and $D_4$ extensions [L] of quadratic extensions of $\Q$ given by Lemmermeyer and convert it into an asymptotic expected number of such extensions as the discriminant goes to $-\infty$.

Recall Lemmermeyer's main result:

\begin{proposition}
Let $k$ be a quadratic number field with discriminant $d$. If there exists an unramified extenion $M/k$ with $\Gal(M/k)\cong Q_8$ which is normal over $\Q$, then
\begin{itemize}
\item[(a)]{$\Gal(M/\Q)\simeq D_4\oplus_{\Z}C_4$,}
\item[(b)]{there is a factorization $d=d_1d_2d_3$ into three discriminants, at most one of which is negative,}
\item[(c)]{for all primes $p_i\mid d_i$ we have $\left(\frac{d_1d_2}{p_3}\right)=\left(\frac{d_1d_3}{p_2}\right)=\left(\frac{d_2d_3}{p_1}\right)=+1$.}
\end{itemize}
\end{proposition}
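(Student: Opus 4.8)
The plan is to take the three conclusions in the order stated, extracting (a) from pure group theory and then feeding the resulting group structure into genus theory and local class field theory to obtain (b) and (c). First I would pin down $G' := \Gal(M/\Q)$. Since $M/k$ is unramified, every inertia subgroup of $G'$ (finite or infinite) meets $\Gal(M/k)\cong Q_8$ trivially and surjects onto $\Gal(k/\Q)\cong C_2$; hence each is generated by an involution lying outside $Q_8$. Because $\Q$ has no nontrivial unramified extension, these inertia subgroups generate $G'$, so $G'$ is a GI-extension of $Q_8$ in the sense of Definition 2.1. It then suffices to enumerate the order-$16$ groups containing $Q_8$ normally of index $2$ and discard those that are not GI-extensions: $Q_8\times C_2$ is excluded because $Q_8$ is not generated by involutions (its only involution is $-1$), and the generalized quaternion group $Q_{16}$ is excluded because its unique involution lies in $Q_8$. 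The surviving candidates correspond to GI-automorphisms $\sigma\in\Aut(Q_8)\cong S_4$ inverting a generating set; computing the induced extension on generators and relations, while tracking the center and the commutator subgroup, identifies it with the central product $D_4\oplus_{\Z}C_4$, which is (a).

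Next, for (b), I would pass to the field $K$ fixed by $Z(Q_8)=[Q_8,Q_8]=\{\pm 1\}$, which is characteristic in $Q_8$ and hence normal in $G'$. Then $\Gal(K/k)\cong Q_8/\{\pm 1\}\cong (C_2)^2$ and $\Gal(K/\Q)\cong G'/\{\pm 1\}\cong (C_2)^3$, so $K/\Q$ is multiquadratic and $K/k$ is unramified. Genus theory for $k=\Q(\sqrt d)$ then forces a factorization $d=d_1d_2d_3$ into three coprime fundamental discriminants, the three quadratic subextensions of $K/k$ being $k(\sqrt{d_1}),k(\sqrt{d_2}),k(\sqrt{d_3})$. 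The sign condition ``at most one $d_i$ negative'' I would read off from the infinite places: embedding the associated $(C_2)^2$-field into a $Q_8$-extension requires the relevant ternary quadratic form built from the products $d_id_j$ to be locally of $Q_8$-type at $\R$, which fails precisely when two of the $d_i$ are negative.

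Finally, for (c), I would use that $M/k$ is unramified at every finite prime. Embedding the multiquadratic $K$ into the $Q_8$-extension $M$ is an embedding problem whose obstruction is measured by Hilbert symbols via Witt's criterion, and demanding that the solution be unramified at a prime $p_i\mid d_i$ forces the corresponding quaternion algebra to split at $p_i$, i.e. the norm-residue symbols to vanish there. Unwinding these local conditions, and symmetrizing them using that $M/\Q$ (not merely $M/k$) is Galois — so that $\Gal(k/\Q)$ permutes the three quadratic subfields — yields exactly $\left(\frac{d_1d_2}{p_3}\right)=\left(\frac{d_1d_3}{p_2}\right)=\left(\frac{d_2d_3}{p_1}\right)=+1$.

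I expect the main obstacle to be (c): correctly translating ``unramified over $k$ and Galois over $\Q$'' into the precise symmetric triple of split conditions, since this requires careful bookkeeping of the decomposition and inertia groups at each $p_i$ inside $G'=D_4\oplus_{\Z}C_4$ and matching the local Hilbert-symbol computation to the reciprocity that the three conditions jointly encode. By comparison, the group theory in (a) is a finite check and the genus-theoretic factorization in (b) is essentially bookkeeping.
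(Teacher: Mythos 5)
The paper does not actually prove this proposition: it is quoted verbatim as ``Lemmermeyer's main result'' and attributed to [L], so there is no internal proof to compare your attempt against. Judged on its own, your plan follows the classical Witt--Lemmermeyer strategy (identify $\Gal(M/\Q)$ as the unique GI-extension of $Q_8$ using Minkowski and the inertia generation argument, pass to the fixed field of $\{\pm 1\}$ and apply genus theory, then control ramification via the Hilbert-symbol obstruction to the $Q_8$-embedding problem), and that is indeed how the result is proved in [L].

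Two caveats. First, your enumeration in (a) is incomplete as written: besides $Q_8\times C_2$ and $Q_{16}$, the semidihedral group $SD_{16}$ also contains $Q_8$ as a normal subgroup of index $2$, and it is even a split extension $Q_8\rtimes C_2$ by an outer involution, so it is not excluded by the splitness criterion of Lemma 2.1 and must be checked by hand; it does fail the GI test (its involutions outside $Q_8$ generate only a $D_4$), but that check has to appear. The surviving GI-automorphism is conjugation by $i$, an \emph{inner} automorphism; in the resulting split extension the element $ti^{-1}$ (with $t$ the adjoined involution) is central of order $4$ squaring to $-1$, which is the cleanest route to $\Gal(M/\Q)\cong Q_8\oplus_{\Z}C_4\cong D_4\oplus_{\Z}C_4$ and is worth making explicit rather than ``tracking the center and commutator.'' Second, parts (b) and (c) are plans rather than proofs: the sign condition in (b) and the three symbol identities in (c) are precisely the output of the local analysis of the embedding problem that you defer to ``Witt's criterion'' and ``norm-residue symbols,'' and you correctly identify this as the substantive part. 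As an architecture the proposal is sound and consistent with Lemmermeyer's argument; as a proof it is not yet complete, and for the purposes of this paper the statement is simply cited from [L].
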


$D_4\oplus_{\Z} C_4$ is given by the direct product of $D_4$ and $C_4$ then identifying the center $Z(D_4)$ with the $C_2\le C_4$. The factorization given in (b) is called a $Q_8$-factorization. Combining Lemermeyer's propositions 5 and 6, we conclude that each extension is given by $\Q(\sqrt{d_1},\sqrt{d_2},\sqrt{d_3},\sqrt{\mu})$ where the $\mu$ can vary by any multiple of $\delta\mid d$. After removing redundancies, we see that for each factorization there are exactly $(2^{\omega(d_1)-1})(2^{\omega(d_2)-1})(2^{\omega(d_3)-1})$ such extensions for $d_i\not\equiv 4\mod 8$, recalling that $\omega(n)=$ the number of prime divisors of $n$. Otherwise, replace $\omega(d_i)$ with $\omega(d_i)-1$.

We will prove $E^{\pm}(Q_8)=\infty$. We will be using $\pm d$ as the discriminant of our imaginary quadratic field so we can keep $d,d_i>0$. A factorization $d=d_1d_2d_3$ is a $Q_8$-factorization if the following equals $1$ (and equals $0$ otherwise):

\begin{align}
\frac{1}{2^{\omega(d)}}\prod_{p\mid d} \left( 1+\left(\frac{\pm d_1d_2}{p} \right)\right)\left( 1+\left(\frac{\pm d_1d_3}{p} \right)\right)\left( 1+\left(\frac{d_2d_3}{p} \right)\right)
\end{align}

Where, without loss of generality, $-d_1$ is the negative discriminant if we need one and $k=\Q(\sqrt{\pm d})$ is quadratic. We can show that the number of unramified $Q_8$ extensions of an imaginary quadratic number field $k$ with discriminant $\pm d$ is given by

\begin{align}
a_{\pm d}=\frac{1}{\delta}\sum_{d=d_1d_2d_3}\frac{2^{\omega(d)-3}}{2^{\omega(d)}}\prod_{p\mid d} \left( 1+\left(\frac{\pm d_1d_2}{p} \right)\right)\left( 1+\left(\frac{\pm d_1d_3}{p} \right)\right)\left( 1+\left(\frac{d_2d_3}{p} \right)\right)
\end{align}

Where the sum is over discriminant factorizations $\pm d=(\pm d_1)d_2d_3$ (With a slight modification for $d_i\equiv 4\mod 8$). $\delta$ accounts for symmetry in the factorization, and is equal to $2$ if $\pm d<0$ and $6$ if $\pm d>0$. This is all a rephrasing of the conditions given in Lemmermeyer's paper for a $Q_8$-factorization.

Let us consider a particular family of discriminants and factorizations, where $d_1,d_2$ are fixed  and $d_3=m$ varies over real quadratic discriminants. From here on $p$ and $q$ will always denote primes, and let us concentrate only on $odd$ values of $d$.

\begin{align*}
a_{\pm d,d_1,d_2}&=\frac{1}{8\delta}\prod_{p\mid d_1d_2} \left(1+\left(\frac{\pm d_1 m}{p}\right)\right)\left(1+\left(\frac{d_2 m}{p}\right)\right)\prod_{q\mid m} \left( 1+\left(\frac{\pm d_1d_2}{q} \right)\right)\\
&=\frac{1}{8\delta}\sum_{a\mid d_1} \left(\frac{\pm d_1 m}{a}\right)\sum_{b\mid d_2}\left(\frac{d_2 m}{b}\right)\prod_{q\mid m} \left( 1+\left(\frac{\pm d_1d_2}{q} \right)\right)
\end{align*}

Our goal is to find the asymptotic behavior of the squence $\sum_{d<X} a_{\pm d,d_1,d_2}$ as $d$ varies. The following two lemmas are a simple exercise in analytic number theory:

%

\begin{lemma}Let $D_{\pm}(s,d_1,d_2)=\sum_{d_1d_2\mid d\text{ disc}} a_{\pm d,d_1,d_2} d^{-s}$. Then
\begin{align*}
D_{\pm} (s,d_1,d_2)&=(d_1d_2)^{-s}\frac{1}{8\delta}\sum_{a\mid d_1}\sum_{b\mid d_2}\sum_{q\mid m,\left(\frac{q}{d_1d_2}\right)=1,sqf} 2^{\omega(m)}\left(\frac{\pm d_1m}{a}\right)\left(\frac{d_2 m}{b}\right)\left(1+\chi_4(m)\right)m^{-s}
\end{align*}
is holomorphic for $\text{Re}(s)>1$.
\end{lemma}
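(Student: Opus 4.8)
The plan is to first establish the displayed identity as an equality of Dirichlet series and then to read off holomorphy from an elementary majorant. Since $d_1,d_2$ are fixed and every discriminant $d$ in the support of $D_\pm$ factors as $d=d_1d_2m$ with $m=d_3$ running over the admissible positive odd discriminants, I would write $d^{-s}=(d_1d_2)^{-s}m^{-s}$, pull the constant $(d_1d_2)^{-s}/(8\delta)$ outside, and interchange the two finite divisor sums $\sum_{a\mid d_1}$, $\sum_{b\mid d_2}$ with the sum over $m$ (legitimate since the inner sums are finite). This reduces the identity to understanding, for fixed $a\mid d_1$ and $b\mid d_2$, the $m$-sum of $\left(\frac{\pm d_1 m}{a}\right)\left(\frac{d_2 m}{b}\right)\prod_{q\mid m}\left(1+\left(\frac{\pm d_1 d_2}{q}\right)\right)m^{-s}$.

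Next I would evaluate the product over $q\mid m$. As $m=d_3$ is squarefree and coprime to $d_1d_2$, each factor $1+\left(\frac{\pm d_1 d_2}{q}\right)$, in which the symbol is an ordinary Legendre symbol since $q\nmid d_1d_2$, equals $2$ or $0$ according as $\left(\frac{\pm d_1 d_2}{q}\right)=\pm 1$; the product therefore collapses to $2^{\omega(m)}$ exactly when every prime $q\mid m$ satisfies $\left(\frac{\pm d_1 d_2}{q}\right)=1$ and vanishes otherwise. Because $\pm d_1 d_2$ is a discriminant, reciprocity for the Kronecker symbol rewrites this splitting condition as $\left(\frac{q}{d_1 d_2}\right)=1$, matching the constraint under the sum in the statement. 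The remaining requirement that $m$ be a genuine odd fundamental discriminant — squarefree and $\equiv 1 \pmod 4$ — is encoded by the factor $1+\chi_4(m)$, which is nonzero precisely for $m\equiv 1\pmod 4$, together with the ``sqf'' restriction; matching these weights against Lemmermeyer's exact count of discriminant factorizations is the step I expect to demand the most care, although each ingredient is routine. With these substitutions the inner $m$-sum becomes exactly $\sum_{m}2^{\omega(m)}\left(\frac{\pm d_1 m}{a}\right)\left(\frac{d_2 m}{b}\right)(1+\chi_4(m))m^{-s}$, giving the claimed closed form.

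Holomorphy then follows from a crude bound, the finiteness of the $a$- and $b$-sums again being decisive: it suffices that each inner series converge absolutely for $\text{Re}(s)>1$. The two symbols have modulus at most $1$ and $1+\chi_4(m)\le 2$, so the coefficient of $m^{-s}$ is bounded in absolute value by $2\cdot 2^{\omega(m)}$; since $m$ is squarefree, $\sum_{m\text{ sqf}}2^{\omega(m)}m^{-\sigma}=\prod_p\left(1+2p^{-\sigma}\right)$, which converges for every $\sigma>1$. Hence each inner series converges absolutely and uniformly on compact subsets of $\{\text{Re}(s)>1\}$, so by the standard theory of Dirichlet series it is holomorphic there; summing the finitely many terms over $a\mid d_1$ and $b\mid d_2$ and multiplying by $(d_1d_2)^{-s}/(8\delta)$ shows $D_\pm(s,d_1,d_2)$ is holomorphic for $\text{Re}(s)>1$, as claimed.
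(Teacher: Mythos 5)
Your proof is correct and follows exactly the route the paper intends (the paper omits the proof, calling it ``a simple exercise in analytic number theory''): substitute $d=d_1d_2m$, collapse the product over $q\mid m$ to $2^{\omega(m)}$ supported on the splitting condition rewritten via reciprocity as $\left(\frac{q}{d_1d_2}\right)=1$, encode the congruence condition on $m$ by $1+\chi_4(m)$, and deduce holomorphy from the convergent majorant $\prod_p\left(1+2p^{-\sigma}\right)$ for $\sigma>1$. The one point genuinely needing care, which you correctly flag, is the normalization of the weight $1+\chi_4(m)$ versus the indicator $\tfrac{1}{2}\left(1+\chi_4(m)\right)$ of $m\equiv 1 \bmod 4$; any such constant is harmless for the holomorphy claim and for the later lower bounds.
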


The terms outside the summations are holomorphic, and so may essentially be ignored when determining asymptotics. We will deal with each summand on the right as follows:

\begin{definition}Given a Dirichlet character $\chi$, define
\begin{align*}
M_{n}^{\pm}(s,\chi)=\sum_{q\mid m \left(\frac{q}{n}\right)=\pm1,sqf} 2^{\omega(m)} \chi(m)m^{-s}
\end{align*}
\end{definition}

This is defined so that we have the following:

\begin{lemma}
\begin{align*}
D_{\pm}(s,d_1,d_2)&=(d_1d_2)^{-s}\frac{1}{8\delta}\sum_{a\mid d_1}\sum_{b\mid d_2}\left(\frac{\pm d_1}{a}\right)\left(\frac{d_2 }{b}\right)M_{d_1d_2}^{+}\left(s,\left(\frac{\cdot}{ab}\right)\right)
\end{align*}
\end{lemma}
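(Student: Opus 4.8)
The plan is to treat this as a bookkeeping identity: the previous lemma already expresses $D_{\pm}(s,d_1,d_2)$ as a triple sum over $a\mid d_1$, $b\mid d_2$, and squarefree $m$ subject to the local condition $\left(\frac{q}{d_1d_2}\right)=1$, so the only task is to isolate the dependence on the summation variable $m$ from the fixed data $d_1,d_2$ and recognize the resulting inner $m$-sum as a value of $M^+_{d_1d_2}$. First I would apply complete multiplicativity of the Kronecker/Jacobi symbol in its top argument to each factor of the summand, writing
\begin{align*}
\left(\frac{\pm d_1 m}{a}\right)\left(\frac{d_2 m}{b}\right) = \left(\frac{\pm d_1}{a}\right)\left(\frac{d_2}{b}\right)\left(\frac{m}{a}\right)\left(\frac{m}{b}\right).
\end{align*}
The factors $\left(\frac{\pm d_1}{a}\right)$ and $\left(\frac{d_2}{b}\right)$ are independent of $m$, so they come out past the inner sum and land in exactly the positions they occupy in the target expression.

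Next I would collapse the two remaining $m$-dependent symbols using multiplicativity in the bottom argument, $\left(\frac{m}{a}\right)\left(\frac{m}{b}\right)=\left(\frac{m}{ab}\right)$, so that the inner sum reads
\begin{align*}
\sum_{\substack{q\mid m\,\Rightarrow\,(q/d_1d_2)=1\\ m\text{ sqf}}} 2^{\omega(m)}\left(\frac{m}{ab}\right)(1+\chi_4(m))\,m^{-s}.
\end{align*}
The summation range here — $m$ squarefree with $\left(\frac{q}{d_1d_2}\right)=1$ for every prime $q\mid m$ — is precisely the range defining $M^+_{n}$ with $n=d_1d_2$, and the character slot is to be filled by $\chi=\left(\frac{\cdot}{ab}\right)$. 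Matching this against the Definition identifies the sum with $M^+_{d_1d_2}\left(s,\left(\frac{\cdot}{ab}\right)\right)$, which, reassembled with the pulled-out factors and the prefactor $(d_1d_2)^{-s}/(8\delta)$, yields the claimed formula. Holomorphy for $\mathrm{Re}(s)>1$ is inherited from the previous lemma, since every manipulation is a term-by-term rearrangement of a series absolutely convergent in that half-plane.

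The step needing the most care is the factor $(1+\chi_4(m))$, which does not appear in the Definition of $M^+_n$: I must either fold it into the character argument or expand $(1+\chi_4(m))=1+\chi_4(m)$ and split into $M^+_{d_1d_2}\left(s,\left(\frac{\cdot}{ab}\right)\right)+M^+_{d_1d_2}\left(s,\chi_4\left(\frac{\cdot}{ab}\right)\right)$, and reconciling this with the single-term expression in the statement is the main bookkeeping subtlety; I would therefore make explicit the convention under which $\left(\frac{\cdot}{ab}\right)$ is understood to carry the discriminant-parity factor. Secondary points to verify are that $m\mapsto\left(\frac{m}{ab}\right)$ is a genuine real Dirichlet character modulo $ab$ even when $a$ and $b$ share common factors, and that the multiplicativity of the Kronecker symbol is applied consistently with the sign carried by $\pm$; none of these should fail, but each must be checked so that the symbolic factorization is literally an equality of Dirichlet series rather than a merely formal one.
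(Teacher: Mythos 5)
Your proof is correct and is exactly the computation the paper intends: the paper offers no proof at all, remarking only that $M_n^{\pm}$ ``is defined so that'' the identity holds, so the term-by-term pulling out of the $m$-independent Jacobi symbols and the identification of the inner sum with $M^{+}_{d_1d_2}$ is the whole argument. The subtlety you flag about $\left(1+\chi_4(m)\right)$ is genuine rather than a defect of your proof: taken literally the inner sum equals $M^{+}_{d_1d_2}\left(s,\left(\frac{\cdot}{ab}\right)\right)+M^{+}_{d_1d_2}\left(s,\chi_4\left(\frac{\cdot}{ab}\right)\right)$, and the paper's single-term statement only makes sense under a convention absorbing the discriminant-parity factor --- which is tacitly what the later residue computation uses, since the second, $\chi_4$-twisted contribution is what produces the $(-1)^{\frac{d_1-1}{2}\frac{d_2-1}{2}}$ term in the corollary --- so you are right to insist that this convention be made explicit.
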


So when searching for poles of $D(s,d_1,d_2)$ as one does when proving asymptotics for coefficients of a Dirichlet series, it suffices to understand the poles and residues of $M_n^{+}(s,\chi)$ for various quadratic characters $\chi$.

\begin{lemma}
$M_n^{\pm}(s,\chi)$ satisfyies the following properties:
\begin{itemize}
\item{It is holomorphic for $\text{Re}(s)>1$}
\item{$\ds M_n^{\pm}(s,\chi)=\prod_{\left(\frac{q}{n}\right)=\pm1} \left( 1 + 2 \chi(q) q^{-s}\right)$}
\item{$\ds M_{n}^{+}(s,\chi)M_{n}^{-}(s,\chi)=\prod_{q\mid n}(1+2\chi(q)q^{-s})^{-1}\sum_{m=sqf} 2^{\omega(m)}\chi(m) m^{-s}$}
\item{$\ds\frac{M_{n}^{+}(s,\chi)}{M_{n}^{-}(s,\chi)} = \prod_{\left(\frac{q}{n}\right)=-1} (1-4\chi^2(q)q^{-2s})^{-1} \prod_q \left(1+2\chi(q)\left(\frac{q}{n}\right)q^{-s}\right)$}
\item{$\ds \frac{M_{n}^{-}(s,\chi)}{M_{n}^{+}(s,\chi)} = \prod_{\left(\frac{q}{n}\right)=1} (1-4\chi^2(q)q^{-2s})^{-1} \prod_q \left(1-2\chi(q)\left(\frac{q}{n}\right)q^{-s}\right)$}
\end{itemize}
\end{lemma}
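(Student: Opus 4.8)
The plan is to treat each of the five bullets as a formal identity between Euler products over primes $q$ (with prime powers contributing nothing, since every sum is over squarefree $m$), and to verify convergence for $\textnormal{Re}(s)>1$ at the end. First I would establish the product formula in the second bullet, which is the structural backbone for everything else. Because $m$ ranges over squarefree integers all of whose prime factors $q$ satisfy $\left(\frac{q}{n}\right)=\pm 1$, the multiplicativity of $2^{\omega(m)}\chi(m)$ (here $\chi$ quadratic, so $\chi(m)$ is multiplicative and $2^{\omega(m)}=\prod_{q\mid m}2$) lets me factor $M_n^{\pm}(s,\chi)$ as $\prod_{\left(\frac{q}{n}\right)=\pm 1}(1+2\chi(q)q^{-s})$, the local factor being $1+2\chi(q)q^{-s}$ because each admissible prime either divides $m$ once or not at all. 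Holomorphy for $\textnormal{Re}(s)>1$ then follows by comparison with $\prod_q(1+2q^{-s})$, which converges there since $\sum 2q^{-s}$ does.

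With the product formula in hand, the remaining four bullets are purely algebraic manipulations of Euler products, done local factor by local factor. For the third bullet I would multiply $M_n^+$ and $M_n^-$: their local factors together range over all primes with $\left(\frac{q}{n}\right)=\pm 1$, i.e. all $q\nmid n$, so the product is $\prod_{q\nmid n}(1+2\chi(q)q^{-s})$; inserting and removing the factors for $q\mid n$ gives $\prod_{q\mid n}(1+2\chi(q)q^{-s})^{-1}\prod_{q}(1+2\chi(q)q^{-s})$, and the full product equals $\sum_{m\ \textnormal{sqf}}2^{\omega(m)}\chi(m)m^{-s}$ by multiplicativity, which is the claimed identity. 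For the fourth bullet I would take the ratio and split according to the Legendre symbol: where $\left(\frac{q}{n}\right)=+1$ the factor $(1+2\chi(q)q^{-s})$ cancels, where $\left(\frac{q}{n}\right)=-1$ the ratio of local factors is $(1+2\chi(q)q^{-s})/(1-2\chi(q)q^{-s})$, and I rewrite this using $(1-4\chi^2(q)q^{-2s})^{-1}(1+2\chi(q)q^{-s})^2$ after clearing, then reorganize so the difference-of-squares terms collect over $\left(\frac{q}{n}\right)=-1$ and the remaining linear factors combine into $\prod_q(1+2\chi(q)\left(\frac{q}{n}\right)q^{-s})$ by noting that the sign $\left(\frac{q}{n}\right)$ flips the sign of the $q^{-s}$ term exactly on the $\left(\frac{q}{n}\right)=-1$ primes. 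The fifth bullet is the reciprocal of the fourth with $\chi\mapsto$ the same symbol and the roles of $\pm$ interchanged, so it follows by the identical computation with signs swapped, or formally by inverting the fourth identity and relabeling.

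The one point requiring genuine care, rather than routine bookkeeping, is the algebraic reshuffling in the fourth and fifth bullets: one must check that the factor $\left(\frac{q}{n}\right)$ inside $\prod_q(1+2\chi(q)\left(\frac{q}{n}\right)q^{-s})$ correctly reproduces the $+$ versus $-$ behavior of the local factors simultaneously over both splitting classes, and that the primes dividing $n$ (where $\left(\frac{q}{n}\right)=0$) contribute trivially and are consistently excluded. I expect this sign-tracking to be the main obstacle, since a single misplaced sign or a mishandled $q\mid n$ factor would invalidate the identity; everything else is a direct consequence of multiplicativity and the squarefree condition. Finally I would remark that all four derived products converge absolutely for $\textnormal{Re}(s)>1$, the difference-of-squares products in fact converging for $\textnormal{Re}(s)>1/2$, which is exactly the analytic-continuation gain that makes these identities useful for locating the poles and residues of $M_n^+(s,\chi)$ in the subsequent asymptotic analysis.
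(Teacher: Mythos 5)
Your overall approach --- derive the Euler product for $M_n^{\pm}(s,\chi)$ from multiplicativity and the squarefree condition, then obtain the other identities by manipulating local factors --- is exactly what the paper does (its entire proof is the remark that these are ``clear by computation of their Euler products''), and the identities you end up asserting are the correct ones.

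However, your bookkeeping of the local factors in the fourth bullet, which you yourself identify as the crux, is garbled. Since $M_n^{+}(s,\chi)$ has local factors only at primes with $\left(\frac{q}{n}\right)=+1$ and $M_n^{-}(s,\chi)$ only at primes with $\left(\frac{q}{n}\right)=-1$, nothing ``cancels'' at the $+1$ primes: the factor $1+2\chi(q)q^{-s}$ simply survives in the numerator. And at the $-1$ primes the local factor of the ratio is $\left(1+2\chi(q)q^{-s}\right)^{-1}=\left(1-2\chi(q)q^{-s}\right)\left(1-4\chi^2(q)q^{-2s}\right)^{-1}$, not $\left(1+2\chi(q)q^{-s}\right)/\left(1-2\chi(q)q^{-s}\right)$. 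Carried out literally, your intermediate steps would yield $\prod_{\left(\frac{q}{n}\right)=-1}\left(1+2\chi(q)q^{-s}\right)^{2}\left(1-4\chi^2(q)q^{-2s}\right)^{-1}$, which is not the stated identity. The correct computation is
\begin{align*}
\frac{M_{n}^{+}(s,\chi)}{M_{n}^{-}(s,\chi)}=\prod_{\left(\frac{q}{n}\right)=1}\left(1+2\chi(q)q^{-s}\right)\cdot\prod_{\left(\frac{q}{n}\right)=-1}\left(1+2\chi(q)q^{-s}\right)^{-1},
\end{align*}
and rationalizing the second product produces both the $\left(1-4\chi^2(q)q^{-2s}\right)^{-1}$ factors and the sign flip that the symbol $\left(\frac{q}{n}\right)$ encodes in $\prod_q\left(1+2\chi(q)\left(\frac{q}{n}\right)q^{-s}\right)$, with the $q\mid n$ factors contributing $1$. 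Everything else in your proposal --- the product formula, holomorphy for $\text{Re}(s)>1$, the third bullet, and the fifth bullet by the symmetric computation --- is fine once this is repaired.
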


The proofs of these are clear by computation of their Euler products. We will show that $M_n^{\pm}$ is both zero and pole free on some domain containing $\text{Re}(s)\ge 1$ with a possible exception at $s=1$, but first we must prove the following lemma:

\begin{lemma}
Consider the series given by Euler product $\prod_p \left(1+a\chi(p)p^{-s}\right)$ for $a$ a nonzero integer. This series is meromorphic on the zero-free region of $L(s,\chi)$, whose only pole or zero is $s=1$ of order $a$ if $\chi=1$. It is holomorphic otherwise.
\end{lemma}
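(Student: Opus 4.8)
The plan is to compare the given Euler product directly against a power of the Dirichlet $L$-function by factoring out $L(s,\chi)^a = \prod_p (1-\chi(p)p^{-s})^{-a}$. Concretely, I would write
\begin{align*}
\prod_p \left(1+a\chi(p)p^{-s}\right) &= L(s,\chi)^a \prod_p \left(1+a\chi(p)p^{-s}\right)\left(1-\chi(p)p^{-s}\right)^{a}
\end{align*}
and let $H(s)$ denote the correction product on the right. The point of inserting the factor $(1-\chi(p)p^{-s})^a$ is that it is exactly tuned to cancel the linear term: expanding $(1-\chi(p)p^{-s})^a = 1 - a\chi(p)p^{-s} + \binom{a}{2}\chi(p)^2 p^{-2s} - \cdots$ (a polynomial in $p^{-s}$ when $a>0$, an absolutely convergent power series for $\operatorname{Re}(s)>0$ when $a<0$) and multiplying by $1+a\chi(p)p^{-s}$ gives a local factor of the form $1 + O_a(p^{-2s})$, since the $p^{-s}$ contributions cancel.

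From this the continuation follows quickly. Since the local factors of $H(s)$ are $1+O_a(p^{-2\sigma})$, the product $\sum_p$ of their logarithms converges absolutely for $\operatorname{Re}(s)>\tfrac12$, so $H(s)$ is holomorphic on that half-plane. On the zero-free region of $L(s,\chi)$ the factor $L(s,\chi)^a$ is holomorphic and nonzero away from $s=1$, and since $a\in\Z$ the power $L(s,\chi)^a$ is genuinely meromorphic (no branch issues). Reading off the pole at $s=1$: when $\chi\ne 1$ the function $L(s,\chi)$ is holomorphic and nonvanishing there, so the whole product is holomorphic; when $\chi=1$ we have $L(s,1)=\zeta(s)$ with a simple pole at $s=1$, whence $L(s,1)^a$ contributes a pole of order $a$ (interpreting negative order as a zero of order $-a$ when $a<0$). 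This is exactly the claimed behavior.

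The step that requires genuine care is the nonvanishing of the correction factor $H(s)$ on the relevant region, and hence the assertion that $s=1$ is the \emph{only} pole or zero. For primes $p$ with $p^{\sigma}>|a|$ the Euler factor $1+a\chi(p)p^{-s}$ satisfies $|a\chi(p)p^{-s}|\le |a|p^{-\sigma}<1$ and so cannot vanish; thus only the finitely many factors with $p\le |a|^{1/\sigma}$ can introduce zeros, and these occur at isolated points that I would inspect (or exclude) directly for the finitely many characters and values of $a$ arising in Lemma~4.3. The main obstacle, therefore, is not the analytic continuation itself but pinning down that these finitely many small-prime Euler factors do not contribute spurious zeros inside the portion of the zero-free region being used; for the downstream application only the pole at $s=1$ and its order $a$ are needed, and that piece is supplied entirely and cleanly by $L(s,\chi)^a$.
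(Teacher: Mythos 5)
Your argument is essentially the paper's: both factor out a power of $L(s,\chi)$ tuned to kill the linear term of each Euler factor and observe that the corrected local factors are $1+O_a(p^{-2s})$, so the correction product converges to a holomorphic function on $\operatorname{Re}(s)>\tfrac12$ and the pole or zero at $s=1$ of order $a$ is read off from $L(s,\chi)^a$. The one substantive difference is the case $a<0$: the paper multiplies top and bottom by $\left(1+\chi(p)p^{-s}\right)^{|a|}$ so that the local numerator stays a polynomial in $p^{-s}$, at the cost of writing the factored-out piece as $\left(L(s,\chi)/L(2s,\chi^2)\right)^{a}$ with an extra $L(2s,\chi^2)$ that is harmless (holomorphic and nonvanishing for $\operatorname{Re}(s)>\tfrac12$); you instead use $\left(1-\chi(p)p^{-s}\right)^{a}$ directly and accept an infinite but absolutely convergent local series, which is equally valid and slightly cleaner. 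The caveat you flag about zeros is a genuine issue that the paper's proof does not address: it simply declares the result ``clear from this decomposition,'' but for $|a|\ge 2$ and $\chi(2)\ne 0$ the single factor $1+a\chi(2)2^{-s}$ vanishes at the points with $2^{-s}=-1/(a\chi(2))$, which lie on the line $\operatorname{Re}(s)=\log|a|/\log 2\ge 1$ inside the zero-free region, so the clause ``whose only pole or zero is $s=1$'' is not literally correct as stated. Only the pole at $s=1$ is needed for the Tauberian step, but the zero-freeness is invoked again in Proposition 4.6 to take a single-valued square root, so your instinct to isolate and inspect the finitely many small-prime factors is the right repair rather than an unnecessary scruple.
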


\begin{proof}
\begin{align*}
\prod_p \left(1+a\chi(p)p^{-s}\right) &= \prod_p \frac{1+ \sum_{k=1}^{|a|}{|a|\choose k}a\chi(p)^{|a|-k+1}p^{(-k+1)s} + \sum_{k=2}^{|a|}{|a|\choose k}a\chi(p)^{|a|-k}p^{-ks} }{ \left(1-sgn(a)\chi(p)p^{-s}\right)^{|a|} }
\end{align*}

which is achieved by multiplying top and bottom by $(1-sgn(a)\chi(p)p^{-s})^{|a|}$. In particular, the numerator has no term which is linear in $p^{-s}$. Thus a computation with natural logs shows the numerator is holomorphic on $\text{Re}(s)>\frac{1}{2}$.Call this function $G(s)$. We then have

\begin{align*}
\prod_p \left(1+a\chi(p)p^{-s}\right) &= \begin{cases}
G(s)L(s,\chi)^{a} & a>0\\
G(s)\left(\frac{L(s,\chi)}{L(2s,\chi^2)}\right)^a & a<0
\end{cases}
\end{align*}
The result is clear from this decompostion.
\end{proof}

\begin{proposition}
$M_n^{\pm}(s,\chi)$ is meromorphic on the intersection of the zero-free regions of $L(s,\chi)$ and $L\left(s,\chi\left(\frac{\cdot}{n}\right)\right)$, whose only pole is at $s=1$ of order $\pm 1$ iff $\chi$  or $\chi\left(\frac{\cdot}{n}\right)$ is trivial, and holomorphic otherwise. Moreover,
\begin{align*}
\text{Res}_{s=1}M_n^{+}(s,1) &= \pm\sqrt{ \prod_p(1+2p^{-1})(1-p^{-1})^2 \left(1+2\left(\frac{p}{n}\right)p^{-1} \right) \left(1-\left(\frac{p}{n}\right)p^{-1}\right)^2}\\
&\hspace{.5cm}\sqrt{\prod_{q\mid n}(1+2q^{-1})^{-1}\prod_{\left(\frac{q}{n}\right)=-1} (1-4q^{-2})^{-1} }L\left(1,\left(\frac{\cdot}{n}\right)\right)\\
\text{Res}_{s=1}M_n^{+}\left(s,\left(\frac{\cdot}{n}\right)\right) &=\pm\sqrt{ \prod_p(1+2p^{-1})(1-p^{-1})^2 \left(1+2\left(\frac{p}{n}\right)p^{-1} \right) \left(1-\left(\frac{p}{n}\right)p^{-1}\right)^2}\\
&\hspace{.5cm}\sqrt{\prod_{\left(\frac{q}{n}\right)=-1} (1-4q^{-2})^{-1} }L\left(1,\left(\frac{\cdot}{n}\right)\right)\\
\end{align*}
\end{proposition}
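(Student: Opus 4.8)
The plan is to reduce everything to the single–Euler–product estimate of the preceding lemma by writing $M_n^{\pm}$ as a square root. Since $M_n^+$ and $M_n^-$ are each holomorphic and nonvanishing for $\mathrm{Re}(s)>1$, I would study the two auxiliary functions
\[
(M_n^+)^2 = \big(M_n^+ M_n^-\big)\cdot\frac{M_n^+}{M_n^-}, \qquad (M_n^-)^2 = \big(M_n^+ M_n^-\big)\cdot\frac{M_n^-}{M_n^+},
\]
and recover $M_n^{\pm}$ by extracting the branch of the square root that agrees with the convergent Euler product on $\mathrm{Re}(s)>1$. The point of this maneuver is that the right-hand factors are exactly the product and quotient expressions furnished by the list of identities in the lemma immediately above, and each of them is, up to a finite Euler product and a product in $q^{-2s}$ (which converges and is zero-free for $\mathrm{Re}(s)>\tfrac12$), of the shape $\prod_p(1+a\chi'(p)p^{-s})$ to which the factorization lemma applies verbatim.

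Second, I would read off meromorphy and the order at $s=1$ factor by factor. The product identity gives $M_n^+M_n^- = \prod_{q\mid n}(1+2\chi(q)q^{-s})^{-1}\prod_p(1+2\chi(p)p^{-s})$, so by the factorization lemma it is meromorphic on the zero-free region of $L(s,\chi)$ with a double pole at $s=1$ precisely when $\chi$ is trivial and holomorphic otherwise. The quotient identity presents $M_n^+/M_n^-$ as a factor that is zero-free for $\mathrm{Re}(s)>\tfrac12$ times $\prod_q(1+2\chi(q)(\frac{q}{n})q^{-s})$, which the factorization lemma controls on the zero-free region of $L(s,\chi(\frac{\cdot}{n}))$, contributing a double pole at $s=1$ exactly when $\chi(\frac{\cdot}{n})$ is trivial; the reciprocal identity contributes the corresponding double zero. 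Multiplying, $(M_n^{\pm})^2$ is meromorphic on the intersection of the two zero-free regions and has an even-order pole (resp. zero) at $s=1$ exactly when one of $\chi$, $\chi(\frac{\cdot}{n})$ is trivial, and is holomorphic and nonvanishing there otherwise; the square root that continues the Euler product then yields the claimed simple pole and holomorphy otherwise. Here I would note that the degenerate situation in which $\chi$ and $\chi(\frac{\cdot}{n})$ are simultaneously trivial does not arise in the application, since that forces $(\frac{\cdot}{n})$ trivial, i.e. $n$ a perfect square, whereas $n=d_1d_2$ is a product of coprime discriminants.

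Third, for the residues I would feed in the explicit factorization from the proof of the factorization lemma, namely $\prod_p(1+2\chi'(p)p^{-s}) = G_{\chi'}(s)\,L(s,\chi')^2$ with $G_{\chi'}(s)=\prod_p(1+2\chi'(p)p^{-s})(1-\chi'(p)p^{-s})^2$ holomorphic and nonzero for $\mathrm{Re}(s)>\tfrac12$. Specializing to $\chi'=1$ replaces $L(s,1)$ by $\zeta(s)\sim (s-1)^{-1}$, so in each of the cases $\chi=1$ and $\chi=(\frac{\cdot}{n})$ the function $(M_n^+)^2$ acquires a pure double pole whose coefficient of $(s-1)^{-2}$ is the product of $G_{1}(1)$, $G_{(\frac{\cdot}{n})}(1)$, the finite Euler factors, and $L(1,(\frac{\cdot}{n}))^2$. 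Extracting $\mathrm{Res}_{s=1}M_n^+$ as the square root of this coefficient (with an unavoidable sign ambiguity, whence the $\pm$) and using $L(1,(\frac{\cdot}{n}))\neq 0$ reproduces the two displayed formulas, with $G_1(1)G_{(\frac{\cdot}{n})}(1)$ assembling into the first radical.

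The main obstacle I anticipate is not the pole location but the passage back from $(M_n^{\pm})^2$ to $M_n^{\pm}$: I must justify that a single-valued meromorphic square root exists on the (connected) intersection of the two zero-free regions and coincides with $M_n^{\pm}$. This is where it matters that $(M_n^{\pm})^2$ is nonvanishing away from $s=1$ and has only an even-order pole or zero at $s=1$, so that its square root is locally single-valued there and can be pinned down by analytic continuation from the convergent Euler product on $\mathrm{Re}(s)>1$. A secondary bookkeeping nuisance, which I would handle but not belabor, is tracking the Euler factors at the primes $q\mid n$ and at the primes split according to $(\frac{q}{n})=\pm1$, since these are precisely what must distribute correctly between the two radicals and the $L$-value in the final residue formulas.
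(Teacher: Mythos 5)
Your proposal is correct and follows essentially the same route as the paper: it decomposes $(M_n^{\pm})^2$ as the product of $M_n^+M_n^-$ and the quotient $M_n^{\pm}/M_n^{\mp}$, applies the preceding factorization lemma to each factor to locate the poles in the two $L$-functions, and recovers $M_n^{\pm}$ by taking the branch of the square root consistent with the convergent Euler product on $\mathrm{Re}(s)>1$. Your additional care in justifying the single-valuedness of the square root and in noting that $\chi$ and $\chi\left(\frac{\cdot}{n}\right)$ cannot both be trivial in the application is a welcome refinement but does not change the argument.
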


\begin{proof}
Using the previous lemma, we can conclude that both $\frac{M_{n}^{+}(s,\chi)}{M_{n}^{-}(s,\chi)}$ and $\frac{M_{n}^{-}(s,\chi)}{M_{n}^{+}(s,\chi)}$ are meromorphic on the zero-free region of $L\left(s,\chi\left(\frac{\cdot}{n}\right)\right)$, whose only pole lies at $s=1$ of order $2$ and $-2$ respectively iff $\chi\left(\frac{\cdot}{n}\right)=1$, and holomorphic otherwise. As in the previous lemma, we let $G(s)=\prod_{p}(1+2\chi(p)\left(\frac{p}{n}\right)p^{-s})(1-\chi(p)\left(\frac{p}{n}\right)p^{-s})^2$ holomorphic on $\text{Re}(s)>\frac{1}{2}$. Moreover, because of our knowledge of the reciprocal, we know $G(s)$ is zero-free on this region and we have the following:
\begin{align*}
\frac{M_{n}^{+}(s,\chi)}{M_{n}^{-}(s,\chi)} &=G(s)\prod_{\left(\frac{q}{n}\right)=-1} (1-4\chi^2(q)q^{-2s})^{-1}L\left(s,\chi(\left(\frac{\cdot}{n}\right)\right)^2
\end{align*}
We handle the reciprocal similarly, implying all the components are holomorphic and zero-free except possibly the $L$-function.\\
\\
In addition, we can also use the previous lemma to show $M_n^+(s,\chi)M_n^-(s,\chi)$ is meromorphic on the zero-free region of $L(s,\chi)$ with a pole of order $2$ iff $\chi=1$, and holomorphic otherwise. Let $F(s)=\prod_p (1+2\chi(p)p^{-s})(1-\chi(p)p^{-2})^2$, similarly shown to be holomorphic and zero-free on the region in question, showing that
\begin{align*}
M_n^+(s,\chi)M_n^-(s,\chi)&=F(s)\prod_{q\mid n}(1+2\chi(q)q^{-s})^{-1}L(s,\chi)^2
\end{align*}
where each component is holomorphic and zero-free on the region in question except possibly the $L$-function.\\
\\
Multiplying these two together gives:
\begin{align*}
M_{n}^+(s,\chi)^2 &=G(s)F(s)\prod_{q\mid n}(1+2\chi(q)q^{-s})^{-1}\prod_{\left(\frac{q}{n}\right)=-1} (1-4\chi^2(q)q^{-2s})^{-1}L(s,\chi)^2 L\left(s,\chi(\left(\frac{\cdot}{n}\right)\right)^2
\end{align*}
which is meromorphic in the intersection of the zero-free regions of $L(s,\chi)$ and $L\left(s,\chi\left(\frac{\cdot}{n}\right)\right)$, whose only pole comes from one of the $L$-functions. In particular, since every component is zero-free on this region (save a possible pole), we take a branch cut along the negative real axis and square root both sides of this equation. This shows that
\begin{align*}
M_{n}^+(s,\chi)&=\pm \sqrt{G(s)F(s)\prod_{q\mid n}(1+2\chi(q)q^{-s})^{-1}\prod_{\left(\frac{q}{n}\right)=-1} (1-4\chi^2(q)q^{-2s})^{-1}}L(s,\chi) L\left(s,\chi(\left(\frac{\cdot}{n}\right)\right)
\end{align*}
where the $\pm$ depends on the root chosen for the other functions. This is meromorphic on the region in question, whose only possible pole is at $s=1$ coming from an $L$-function. Calculation of the residues is then straight forward.
\end{proof}

Going back to the series in question, we can conclude the following:

\begin{corollary}
$D_{\pm}(s,d_1,d_2)$ is meromorphic on a finite intersection of zero-free regions of $L$-functions, whose only pole is a simple one at $s=1$ of residue
\begin{align*}
\pm(d_1 d_2)^{-1}\frac{1}{8\delta} L\left(1,\left(\frac{\cdot}{d_1d_2}\right)\right)\left(\sqrt{\prod_{q\mid d_1d_2}(1+2q^{-1})^{-1}}+(-1)^{\frac{d_1-1}{2}\frac{d_2-1}{2}}\right)\\
\sqrt{ \prod_p(1+2p^{-1})(1-p^{-1})^2 \left(1+2\left(\frac{p}{d_1d_2}\right)p^{-1} \right) \left(1-\left(\frac{p}{d_1d_2}\right)p^{-1}\right)^2\prod_{\left(\frac{q}{d_1d_2}\right)=-1} (1-4q^{-2})^{-1} }\\
\end{align*}
\end{corollary}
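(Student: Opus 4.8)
The plan is to feed the finite-sum expression for $D_\pm(s,d_1,d_2)$ through the residue computation for $M^+_{d_1d_2}$ established in the Proposition. Writing $n=d_1d_2$, the preceding lemma expresses $D_\pm(s,d_1,d_2)$ as $(d_1d_2)^{-s}\tfrac{1}{8\delta}$ times a \emph{finite} sum of the functions $M^+_{n}\!\left(s,\left(\tfrac{\cdot}{ab}\right)\right)$ over $a\mid d_1$, $b\mid d_2$, each weighted by a product of quadratic symbols in $d_1$ and $d_2$. Since the Proposition shows each summand is meromorphic on the intersection of the zero-free regions of $L\!\left(s,\left(\tfrac{\cdot}{ab}\right)\right)$ and $L\!\left(s,\left(\tfrac{\cdot}{ab\,n}\right)\right)$, and there are only finitely many pairs $(a,b)$, the function $D_\pm$ is automatically meromorphic on the (finite) intersection of all these regions. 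The prefactor $(d_1d_2)^{-s}$ and the symbol weights are entire and nonvanishing, so they do not create or cancel poles. This settles the meromorphy claim and reduces the problem to locating the poles at $s=1$ and summing the residues.

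Next I would determine which summands actually contribute a pole. By the Proposition, $M^+_n\!\left(s,\left(\tfrac{\cdot}{ab}\right)\right)$ has a simple pole at $s=1$ precisely when $\left(\tfrac{\cdot}{ab}\right)$ or $\left(\tfrac{\cdot}{ab\,n}\right)$ is the trivial character, and is holomorphic there otherwise. Because $d_1,d_2$ are coprime squarefree discriminants and $a\mid d_1$, $b\mid d_2$, both $ab$ and $ab\,n=ab\,d_1d_2$ are squarefree, so a symbol $\left(\tfrac{\cdot}{N}\right)$ with $N$ squarefree is trivial iff $N=1$. Hence $\left(\tfrac{\cdot}{ab}\right)$ is trivial iff $(a,b)=(1,1)$, while $\left(\tfrac{\cdot}{ab\,d_1d_2}\right)$ is trivial iff its squarefree part $(d_1/a)(d_2/b)$ equals $1$, i.e. iff $(a,b)=(d_1,d_2)$. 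Exactly two of the finitely many summands carry poles, and both are simple, so $D_\pm$ has at most a simple pole at $s=1$.

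Finally I would add the two residues. The $(a,b)=(1,1)$ term has weight $1$ and contributes $\operatorname{Res}_{s=1}M^+_n(s,1)$, while the $(a,b)=(d_1,d_2)$ term contributes $\operatorname{Res}_{s=1}M^+_n\!\left(s,\left(\tfrac{\cdot}{n}\right)\right)$ with a weight that, being a product of quadratic symbols with coprime arguments $d_1,d_2$, is rewritten by quadratic reciprocity as $(-1)^{\frac{d_1-1}{2}\frac{d_2-1}{2}}$. The two residue formulas from the Proposition are identical except that $\operatorname{Res}_{s=1}M^+_n(s,1)$ carries the extra factor $\sqrt{\prod_{q\mid n}(1+2q^{-1})^{-1}}$; factoring out the common part, the sum of residues equals that common part times $\big(\sqrt{\prod_{q\mid d_1d_2}(1+2q^{-1})^{-1}}+(-1)^{\frac{d_1-1}{2}\frac{d_2-1}{2}}\big)$, and multiplying by $(d_1d_2)^{-1}\tfrac{1}{8\delta}$ reproduces exactly the stated residue, with the overall $\pm$ inherited from the branch-cut ambiguity of the square roots in the Proposition.

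The main obstacle I anticipate is the sign bookkeeping in this last step. First, one must handle the symbol $\left(\tfrac{\pm d_1}{d_2}\right)$ with the genuinely negative discriminant $-d_1$ in the imaginary case, checking that the auxiliary factor $\left(\tfrac{-1}{d_2}\right)$ collapses (using that the odd $d_i$ are fundamental discriminants, hence $\equiv 1 \bmod 4$) so that the weight really is $(-1)^{\frac{d_1-1}{2}\frac{d_2-1}{2}}$ uniformly in the sign $\pm$. Second, the two branch-cut signs in $\operatorname{Res}_{s=1}M^+_n(s,1)$ and $\operatorname{Res}_{s=1}M^+_n\!\left(s,\left(\tfrac{\cdot}{n}\right)\right)$ must be chosen consistently, since a mismatched choice would spuriously convert the sum of residues into a difference and change whether the bracketed quantity can vanish. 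By contrast, the meromorphy and pole-counting steps are routine once the Proposition is in hand.
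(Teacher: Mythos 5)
Your proposal is correct and follows essentially the same route as the paper, which merely remarks that the poles come from the two terms $(a,b)=(1,1)$ and $(a,b)=(d_1,d_2)$ in the finite sum of Lemma 4.4, applies the residue formulas of the Proposition, and chooses the branch signs consistently so the overall $\pm$ factors out. Your write-up simply makes explicit the details (squarefreeness forcing exactly two polar terms, quadratic reciprocity giving the $(-1)^{\frac{d_1-1}{2}\frac{d_2-1}{2}}$ weight) that the paper leaves as "straightforward."
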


The proof of this is straightforward. The residues only come from two of the terms in the sum expressing $D_{\pm}(s,d_1,d_2)$, so we can choose our roots in such a way that the $\pm$ can factor out. This being a simple pole is a consequence of the resiude being nonzero. Most of it was concluded nonzero in the preceding proof, and the remaining bits are trivially nonzero.\\
\\
The standard use of a Tauberian theorem shows that $\sum_{d<X} a_{\pm d,d_1,d_2}\sim \text{Res}_{s=1}D_{\pm}(s,d_1,d_2) X$. Note, however, that this is a sum of all positive terms, so the residue itself must be positive. We can make the following bounds:
\begin{align*}
&\sqrt{ \prod_p(1+2p^{-1})(1-p^{-1})^2 \left(1+2\left(\frac{p}{d_1d_2}\right)p^{-1} \right) \left(1-\left(\frac{p}{d_1d_2}\right)p^{-1}\right)^2\prod_{\left(\frac{q}{d_1d_2}\right)=-1} (1-4q^{-2})^{-1} }\\
=&\sqrt{ \prod_p(1+2p^{-1})(1-p^{-1})^2 \left(1-p^{-2}+2\left(\frac{p}{d_1d_2}\right)p^{-3}\right) \prod_{\left(\frac{q}{d_1d_2}\right)=-1} (1-4q^{-2})^{-1} }\\
\ge&\sqrt{ \prod_p(1+2p^{-1})(1-p^{-1})^2 \left(1-p^{-2}-2p^{-3}\right)}
\end{align*}

For sufficiently large $p$. A constant independent of $d_1,d_2$. For small $p$, it is trivial to bound the absolute value of the factors independent of $d_1,d_2$. Noting the need for everything to be positive, we may also bound the following component:

\begin{align*}
1+(-1)^{\frac{d_1-1}{2}\frac{d_2-1}{2}}\sqrt{\prod_{q\mid d_1d_2}(1+2q^{-1})^{-1}}&\ge 1+\sqrt{\prod_{q\mid d_1d_2}2^{-1}}
&\ge 1-2^{-\frac{1}{2}\omega(d_1d_2)}\\
&\ge 1-2^{-\frac{1}{2}2}\\
&=\frac{1}{2}
\end{align*}

which is also a constant independent of $d_1,d_2$. So we can conclude:

\begin{lemma}
There exists a constant $c$ independent of $d_1,d_2$ such that
\begin{align*}
\text{Res}_{s=1}D_{\pm}(s,d_1,d_2) \ge c \frac{L\left(1,\left(\frac{\cdot}{d_1d_2}\right)\right)}{d_1d_2}
\end{align*}
\end{lemma}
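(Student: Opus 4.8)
The plan is to assemble the pieces that immediately precede the statement: the preceding corollary gives an exact formula for $\text{Res}_{s=1}D_{\pm}(s,d_1,d_2)$, the Tauberian remark shows that this residue is a positive real, and the two displayed estimates bound the two $d_1,d_2$-dependent factors of that formula from below by absolute constants. Extracting the common factor $L(1,(\frac{\cdot}{d_1d_2}))/(d_1d_2)$ then leaves only a product of quantities bounded uniformly away from $0$.

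In more detail, I would first invoke the corollary to write the residue as $\frac{1}{d_1d_2}\cdot\frac{1}{8\delta}\cdot L(1,(\frac{\cdot}{d_1d_2}))$ times the two factors $A = \sqrt{\prod_{q\mid d_1d_2}(1+2q^{-1})^{-1}} + (-1)^{\frac{d_1-1}{2}\frac{d_2-1}{2}}$ and $B$, the large square-root Euler-product factor, up to an overall sign. Because the coefficients $a_{\pm d,d_1,d_2}$ are non-negative and $s=1$ is a genuine simple pole, the Tauberian asymptotic $\sum_{d<X}a_{\pm d,d_1,d_2}\sim (\text{Res}_{s=1}D_{\pm}(s,d_1,d_2))X$ forces the residue to be strictly positive; this pins the ambiguous $\pm$ to $+$ and lets me treat the whole expression as a positive real. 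I would also record that $L(1,(\frac{\cdot}{d_1d_2}))>0$ (class number formula), so the target inequality is between positive quantities.

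Next I would apply the two estimates exactly as displayed before the statement. For $B$, replacing $(1+2(\frac{p}{d_1d_2})p^{-1})(1-(\frac{p}{d_1d_2})p^{-1})^2$ by its worst case over the Legendre symbol and using $\prod_{(\frac{q}{d_1d_2})=-1}(1-4q^{-2})^{-1}\ge 1$ gives $B \ge \sqrt{\prod_p(1+2p^{-1})(1-p^{-1})^2(1-p^{-2}-2p^{-3})} =: c_1$, a fixed positive constant independent of $d_1,d_2$. For $A$, the bound $A \ge 1/2$ is uniform. Since $\frac{1}{8\delta}\ge\frac{1}{48}$ (as $\delta\in\{2,6\}$), combining yields $\text{Res}_{s=1}D_{\pm}(s,d_1,d_2)\ge \frac{c_1}{96}\cdot L(1,(\frac{\cdot}{d_1d_2}))/(d_1d_2)$, i.e.\ the lemma with $c=c_1/96$.

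The only point needing genuine care is confirming that $c_1$ is a strictly positive constant. Restricting to odd $d$ makes every prime in the product at least $3$, where $1-p^{-2}-2p^{-3}>0$, and for large $p$ each factor is $1+O(p^{-2})$, so the product converges to a positive limit; the finitely many small primes need only be checked to give positive factors, which is the ``trivial to bound'' step in the displayed computation. Once the sign of the residue has been fixed by positivity, the remainder is bookkeeping.
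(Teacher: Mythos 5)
This is correct and is essentially the paper's own argument: read the residue off the preceding corollary, use positivity of the coefficients (via the Tauberian asymptotic) to fix the sign, and bound the two $d_1,d_2$-dependent factors from below by absolute constants before extracting $L\left(1,\left(\frac{\cdot}{d_1d_2}\right)\right)/(d_1d_2)$. The only input worth re-deriving rather than citing is $A\ge\frac{1}{2}$: the paper's displayed chain uses $(1+2q^{-1})^{-1}\le 2^{-1}$, which fails for odd $q$, but the bound holds anyway because $d_2$ is a positive odd fundamental discriminant, so $d_2\equiv 1\pmod 4$, the exponent $\frac{d_1-1}{2}\cdot\frac{d_2-1}{2}$ is even, and hence $A=1+\sqrt{\prod_{q\mid d_1d_2}(1+2q^{-1})^{-1}}\ge 1$.
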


\begin{corollary}
$E^{\pm}(Q_8)=\infty$
\end{corollary}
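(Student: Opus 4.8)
The plan is to bound $E^{\pm}(Q_8)$ from below by the combined contribution of infinitely many disjoint sub-families of $Q_8$-factorizations, each of which is controlled by the residue produced above. Write $N_{\pm}(X)=\sum_{k\in\mathcal{D}^{\pm}_X}1$; counting fundamental discriminants gives $N_{\pm}(X)\sim CX$ for a fixed positive constant $C$, and by definition $E^{\pm}(Q_8)=\lim_{X\to\infty}\left(\sum_{d<X}a_{\pm d}\right)/N_{\pm}(X)$. Since every $a_{\pm d}\ge 0$, it suffices to exhibit arbitrarily large lower bounds for $\liminf_{X\to\infty}\left(\sum_{d<X}a_{\pm d}\right)/X$: an infinite $\liminf$ forces the defining limit to be $+\infty$.

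The structural point that makes this clean is that $a_{\pm d}$ is a sum of nonnegative terms, one for each ordered $Q_8$-factorization $d=d_1d_2d_3$ (with $-d_1$ the negative discriminant in the imaginary case), and that $a_{\pm d,d_1,d_2}$ is exactly the single term attached to the ordered pair $(d_1,d_2)$ with $d_3=d/(d_1d_2)$. Hence for any finite set $\mathcal{F}$ of distinct admissible ordered pairs $(d_1,d_2)$ we have $\sum_{(d_1,d_2)\in\mathcal{F}}a_{\pm d,d_1,d_2}\le a_{\pm d}$ with no overcounting, because distinct ordered pairs select distinct terms of the factorization sum. Summing over $d<X$, dividing by $N_{\pm}(X)$, and applying the Tauberian asymptotic $\sum_{d<X}a_{\pm d,d_1,d_2}\sim\left(\text{Res}_{s=1}D_{\pm}(s,d_1,d_2)\right)X$ to each of the finitely many pairs, I would obtain
\[
E^{\pm}(Q_8)\ \ge\ \frac{1}{C}\sum_{(d_1,d_2)\in\mathcal{F}}\text{Res}_{s=1}D_{\pm}(s,d_1,d_2)\ \ge\ \frac{c}{C}\sum_{(d_1,d_2)\in\mathcal{F}}\frac{L\!\left(1,\left(\frac{\cdot}{d_1d_2}\right)\right)}{d_1d_2},
\]
the last step being the preceding lemma. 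As $\mathcal{F}$ is finite, interchanging the $X$-limit with the sum is immediate, so the inequality is rigorous for every finite $\mathcal{F}$; positivity of each residue (recorded in the lemma) also guarantees that each family genuinely contains infinitely many admissible $m=d_3$.

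It then remains to show that the full series $\sum_{(d_1,d_2)}L\!\left(1,\left(\frac{\cdot}{d_1d_2}\right)\right)/(d_1d_2)$ diverges, after which letting $\mathcal{F}$ exhaust the admissible pairs forces $E^{\pm}(Q_8)=\infty$. Fixing $d_1$ and letting $d_2$ run over odd fundamental discriminants coprime to $d_1$, I would invoke the classical mean value $\frac{1}{\#\{d_2\le D\}}\sum_{d_2\le D}L\!\left(1,\left(\frac{\cdot}{d_1d_2}\right)\right)\to L_\infty>0$, giving $\sum_{d_2\le D}L\!\left(1,\left(\frac{\cdot}{d_1d_2}\right)\right)\asymp D$; partial summation against the weight $1/d_2$ then yields $\sum_{d_2\le D}L\!\left(1,\left(\frac{\cdot}{d_1d_2}\right)\right)/d_2\asymp\log D\to\infty$. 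A sufficient weaker input is that $L(1,\chi)$ exceeds a fixed positive constant for a positive proportion of these $d_2$, so that the series dominates a constant multiple of the divergent sum $\sum 1/d_2$.

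The main obstacle is precisely this divergence. The unconditional pointwise lower bounds on $L(1,\chi)$ (of size $1/\log d$) are too weak, since $\sum_{d_2}1/(d_2\log d_2)$ converges; the argument therefore genuinely needs the average behavior $L(1,\chi)\asymp 1$ across the family rather than a worst-case bound. The remaining work is bookkeeping: verifying that for a fixed $d_1$ infinitely many $d_2$ produce admissible factorizations respecting the sign and congruence constraints (already ensured by the positivity of the residue), and that the constants $c$ and $C$ are strictly positive and independent of $(d_1,d_2)$, which follows from the preceding lemma and the count of fundamental discriminants.
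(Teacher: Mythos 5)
Your proposal is correct and follows essentially the same route as the paper: restrict to a finite family of pairs $(d_1,d_2)$, use nonnegativity to drop the rest, apply the Tauberian asymptotic and the residue lower bound termwise, and then let the family grow so that divergence of $\sum L\left(1,\left(\frac{\cdot}{d_1d_2}\right)\right)/(d_1d_2)$ forces the expected value to be infinite. The mean-value input you invoke for that final divergence is exactly the Goldfeld--Hoffstein result [GH] the paper cites, so the two arguments coincide in substance.
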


\begin{proof}
We only need a lower bound on the expected number to be infinite, so let us only consider odd discriminants. By definition, this expected number is equal to $\sum_{d<X}\sum_{d_1,d_2} a_{\pm d,d_1,d_2}$. Then we have the following:
\begin{align*}
E^{\pm}(Q_8)&=\lim_{X\rightarrow\infty} \frac{1}{X}\sum_{d<X}\sum_{d_1d_2\mid d} a_{\pm d,d_1,d_2}\\
&\ge \sum_{d_1,d_2<N} \lim_{X\rightarrow \infty}\frac{1}{X}\sum_{d<X} a_{\pm d,d_1,d_2}\\
&\ge \sum_{d_1,d_2<N}c \frac{L\left(1,\left(\frac{\cdot}{d_1d_2}\right)\right)}{d_1d_2}\\
&\ge \frac{c}{4}\sum_{d<N}\frac{L\left(1,\left(\frac{\cdot}{d}\right)\right)}{d}
\end{align*}
For some integer $N>0$ where the sum is over discriminants $d$. Obviously this is a very weak lower bound, given all the information we have dropped, but it is sufficient. Taking $N\rightarrow \infty$ gives an infinite lower bound, a result due to [GH].
\end{proof}

Lemmermeyer gives a similar classification of unramified extensions with Galois group $D_4$, and the proof in the section can be modified to show that:
\begin{corollary}
$E^\pm(G)=E^{\pm}(G,G')=\infty$ for $G=Q_8$ and $D_4$ and their unique GI-extensions.
\end{corollary}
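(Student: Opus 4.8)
The plan is to separate two logically independent tasks: first, reducing the refined quantity $E^\pm(G,G')$ to $E^\pm(G)$ by showing that each of $Q_8$ and $D_4$ has a \emph{unique} GI-extension; and second, proving $E^\pm(G)=\infty$ for each group, which for $Q_8$ is the corollary just proved and for $D_4$ requires rerunning the analytic argument of this section on Lemmermeyer's $D_4$-classification. The reduction is the easy half. Recall from the opening of Section 2 that for an unramified $K/k$ normal over $\Q$ the group $\Gal(K/\Q)$ is always a GI-extension of $\Gal(K/k)$. Hence if $G\in\{Q_8,D_4\}$ has a unique GI-extension $G'$, every extension counted by $E^\pm(G)$ automatically satisfies $\Gal(K/\Q)=G'$, so $E^\pm(G)=E^\pm(G,G')$ with no further work and the two displayed equalities collapse to a single claim.

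To prove uniqueness I would compute the GI-automorphisms directly and feed them into the bijection with conjugacy classes of GI-automorphism cosets in $\textnormal{Out}(G)$. For $Q_8$ a GI-automorphism must invert enough order-$4$ elements to generate; inverting the axes of two of $i,j,k$ forces the third to be fixed (e.g. $\sigma(i)=i^{-1}$, $\sigma(j)=j^{-1}$ give $\sigma(k)=\sigma(ij)=k$), so there are exactly three GI-automorphisms, and one checks these are precisely conjugation by $i,j,k$. Thus every GI-automorphism is inner, all cosets are trivial in $\textnormal{Out}(Q_8)\cong S_3$, and the bijection yields a single GI-extension, the central product $D_4\oplus_\Z C_4$ of Lemmermeyer's proposition. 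For $D_4=\langle r,s\rangle$, writing a general automorphism as $r\mapsto r^a$ with $a\in\{1,3\}$ and $s\mapsto r^b s$, a short enumeration shows the GI-automorphisms are exactly the identity and conjugation by the reflections $s$ and $rs$; again all are inner, $\textnormal{Out}(D_4)\cong C_2$ contributes only its trivial class, and there is a unique GI-extension. This gives $E^\pm(G)=E^\pm(G,G')$ in both cases.

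It then remains to show $E^\pm(D_4)=\infty$, mirroring the computation just completed for $Q_8$. I would start from Lemmermeyer's classification of unramified $D_4$-extensions of a quadratic field normal over $\Q$, which, like the $Q_8$ case, expresses the count in terms of factorizations of the discriminant into coprime discriminants subject to quadratic-residue (Jacobi symbol) conditions. This produces a counting function $a_{\pm d}$ of the same shape as the $Q_8$ formula: a sum over admissible factorizations weighted by products of factors $1+\left(\tfrac{\cdot}{\cdot}\right)$. Fixing all but one factor and letting the last range over real quadratic discriminants $m$, exactly as before, I would form the Dirichlet series $D_\pm(s,\cdots)$, decompose it through the building blocks $M_n^\pm(s,\chi)$, and invoke their meromorphic continuation, pole location, and residue computation (which depend only on the Euler-product shape, not on the specific group) to find a simple pole at $s=1$. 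A Tauberian theorem then gives $\sum_{d<X}a_{\pm d,\ldots}\sim(\textnormal{Res}_{s=1}D_\pm)\,X$ with residue bounded below by a positive constant times $L\!\left(1,\left(\tfrac{\cdot}{d_1d_2}\right)\right)/(d_1d_2)$ uniformly in the fixed parameters, and summing over those parameters reduces divergence to $\sum_d L\!\left(1,\left(\tfrac{\cdot}{d}\right)\right)/d=\infty$ by [GH].

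The main obstacle is the last sentence: confirming that Lemmermeyer's $D_4$-conditions really yield a counting formula in the same $(1+\text{Jacobi symbol})$-product form, and that after the $M_n^\pm$-decomposition the residue at $s=1$ remains \emph{positive} and bounded below by $c\,L(1,(\cdot/d_1d_2))/(d_1d_2)$ uniformly. The $D_4$ case carries different sign and residue-symbol constraints than $Q_8$, so one must verify these neither force cancellation among the terms contributing to the residue nor kill the main term. Because $a_{\pm d}$ is a count of extensions it is manifestly nonnegative, which (as in the $Q_8$ argument) pins the sign of the residue and lets the positivity bound go through. Once the formula is matched to this template every analytic step transfers verbatim, and the divergence of $\sum_d L(1,(\cdot/d))/d$ again forces $E^\pm(D_4)=\infty$, completing the proof.
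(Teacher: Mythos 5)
Your reduction of $E^\pm(G,G')$ to $E^\pm(G)$ via uniqueness of the GI-extension is sound, and is actually more complete than the paper, which merely asserts uniqueness; your enumeration of the GI-automorphisms of $Q_8$ (conjugation by $i,j,k$) and of $D_4$ (the identity and conjugation by two reflections), all inner and hence lying in the trivial class of $\textnormal{Out}(G)$, is the right argument and correctly identifies the unique GI-extensions as $D_4\oplus_{\Z}C_4$ and $D_4\times C_2$ respectively.

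The gap is in the analytic half, exactly at the step you defer. Lemmermeyer's $D_4$ condition does \emph{not} have the same coupled shape as the $Q_8$ one: for a factorization $d=d_1d_2d_3$ the relevant condition is $\left(\frac{d_1}{p_2}\right)=\left(\frac{d_2}{p_1}\right)=1$ for all $p_i\mid d_i$, which involves only $d_1$ and $d_2$ and imposes no residue condition tying the varying factor $d_3=m$ to $d_1d_2$. Once $d_1,d_2$ are fixed, the sum over $m$ is an unconstrained sum over squarefree discriminants, so the $M_n^{\pm}$ machinery is unnecessary and, more importantly, no factor $L\left(1,\left(\frac{\cdot}{d_1d_2}\right)\right)$ appears in the residue. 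The residue at $s=1$ is instead proportional to the finite character sum $(d_1d_2)^{-1}\sum_{a\mid d_1}\sum_{b\mid d_2}\left(\frac{\pm d_1}{b}\right)\left(\frac{d_2}{a}\right)$, and this vanishes for many pairs (already for $d_1=p$ prime and $d_2$ a nonresidue mod $p$ the inner sum $1+\left(\frac{d_2}{p}\right)$ is zero). Hence your claimed uniform lower bound $c\,L\left(1,\left(\frac{\cdot}{d_1d_2}\right)\right)/(d_1d_2)$ is false for $D_4$, and the [GH] endgame does not apply. Positivity of the coefficients only guarantees the residue is nonnegative, not bounded below. The repair is elementary and is what the paper does: take $d_1$ a fixed odd prime and $d_2\equiv 1\bmod 4d_1$ prime, so that quadratic reciprocity forces the character sum to equal $4$, and then $\sum_{d_2}1/d_2$ diverges by Dirichlet's theorem on primes in arithmetic progressions. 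Your meromorphy and Tauberian steps transfer; the residue lower bound and the source of divergence do not.
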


\begin{proof}
Lemermeyer proves similar results for unramified $D_4$-extensions of quadratic fields.\\
\\
$D_4$ has a unique GI-extension $D_4\times C_2$, and there exist $\prod_{i=1}^3\frac{2^{\omega(d_i)-1}}{2^{\omega(d_i)}}$ unramified $D_4$-extensions of $k=\Q(\sqrt{d})$ whenever $\left(\frac{d_1}{p_2}\right)=\left(\frac{d_2}{p_1}\right)=1$ for every $p_i\mid d_i$. We trace though the same steps in this section, defining $a_{_\pm d,d_1,d_2}$ to be the number of such extensions with $\pm d, d_1, d_2$ fixed and $D_{\pm }(s,d_1,d_2)=\sum_{d_1,d_2\mid d} a_{\pm d,d_1,d_2}d^{-s}$. We find, in the same vein of lemma 4.2, that for some integer $0\le \delta\le 6$ to account for permutations,
\begin{align*}
D_{\pm}(s,d_1,d_2)&=(d_1d_2)^{-s}\frac{1}{8\delta}\sum_{a\mid d_1}\sum_{b\mid d_2}\left(\frac{\pm d_1}{b}\right)\left(\frac{d_2}{a}\right)\sum_{m\text{ sqf }} (1+\chi_4(m))m^{-s}
\end{align*}
Which trivially has one simple pole at $s=1$ with resiude $(d_1d_2)^{-1}\frac{1}{8\delta}\sum_{a\mid d_1}\sum_{b\mid d_2}\left(\frac{\pm d_1}{b}\right)\left(\frac{d_2}{a}\right)$. Consider the pairs $d_1, d_2$ with $d_1$ a fixed odd prime and $d_2=d_1 4x+1>0$ for any integer $x$ which also makes $d_2$ prime. Thus $\sum_{a\mid d_1}\sum_{b\mid d_2}\left(\frac{\pm d_1}{b}\right)\left(\frac{d_2}{a}\right)=4$ by a simple exercise in quadratic reciprocity. It then follows:
\begin{align*}
E^{\pm}(D_4)&\ge \sum_{d_1,d_2}\text{Res}_{s=1} D(s,d_1,d_2)\\
&\ge  \sum_{d_2=d_1 4x=1\text{ prime}} (d_1 d_2)^{-1}\frac{1}{2\delta}\\
&=\frac{1}{d_1 2\delta}\sum_{d_2=d_1 4x+1\text{ prime}} (d_2)^{-1}\\
&=\infty
\end{align*}
by Dirichlet's theorem on arithmetic progressions.
\end{proof}

\section{Trivial GI-extensions}

In one case, we can say something in more generality, and that is when the group $G$ is generated by elements of order $2$. In this case it has a trivial GI-extension given by $G\times C_2$. Any unramified extension of a quadratic field corresponding to this GI-extension is then a compositum of the quadratic field and some field over Galois group $G$ whose inertia groups are all cyclic of order $1$ or $2$. How much room does this extra freedom give us?

\begin{lemma}
Suppose we already have $K/\Q$ with Galois group $G$ such that $K\Q(\sqrt{d})/\Q(\sqrt{d})$ is unramified. Then there are asymptotically
\begin{align*}
\ge&\frac{27}{4|d|\pi^2}X \text{ if $d$ is odd }\\
\ge&\frac{3}{|d'|\pi^2}X \text{ if $d=2^t d'$ is even with $d'$ odd}
\end{align*}
unramified $G$-extensions of quadratic fields with Galois group $G\times C_2$ over $\Q$ which is the compositum of $K$ with a quadratic field.
\end{lemma}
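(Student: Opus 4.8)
The plan is to fix $K$ and vary the quadratic field $F=\Q(\sqrt{D})$, reducing the problem to a purely local ramification condition on $D$ and then counting the admissible $D$. First I would pin down the ramification of $K/\Q$. Since $K\Q(\sqrt d)/\Q(\sqrt d)$ is unramified, every prime ramifying in $K$ must already ramify in $\Q(\sqrt d)/\Q$, hence divides $d$; moreover its inertia subgroup in $\Gal(K/\Q)=G$ has order exactly $2$. Indeed, in $\Gal(K\Q(\sqrt d)/\Q)\cong G\times C_2$ the inertia group $I_p$ at such a prime must meet $G\times\{1\}$ trivially (that is what ``unramified over $\Q(\sqrt d)$'' means), so the projection $I_p\to C_2$ is injective and $I_p\neq 1$, forcing $|I_p|=2$ and an order-$2$ inertia image in $G$. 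Write $N$ for the product of the primes ramifying in $K$, so $N\mid d$.

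Next I would establish the local cancellation criterion. For an odd prime $p$ the tamely ramified quadratic extension of $\Q_p$ is unique up to an unramified twist, so its inertia acts on the tame quotient in a single way. Consequently, if $p$ ramifies in $K$ (necessarily tamely, with inertia $C_2$) and also ramifies in $F=\Q(\sqrt D)$, the two ramified quadratic behaviors agree on the tame quotient and cancel in the compositum, so $K F/F$ is unramified at $p$ for every choice of unit part of $D$. If instead $p$ ramifies in $K$ but $p\nmid\mathrm{disc}(F)$, the ramification survives and $F$ fails. At a prime dividing $\mathrm{disc}(F)$ that is unramified in $K$ the only ramification is the quadratic one, which $F$ absorbs. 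Thus, modulo the prime $2$, the field $F$ is admissible exactly when $N\mid\mathrm{disc}(F)$, and $\Gal(KF/\Q)$ is the full $G\times C_2$ precisely when $\sqrt D\notin K$, a condition excluding only the finitely many $D$ with $\Q(\sqrt D)\subseteq K$.

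With the criterion in hand the count becomes a density question: I would count fundamental discriminants $D$ with $|D|\le X$ that are divisible by $N$ and squarefree away from $N$. By a standard squarefree sieve, or equivalently a Tauberian argument applied to the associated Dirichlet series exactly as in Section 4, this count is asymptotic to $cX$ for an explicit constant $c$ expressible as an Euler product. Bounding each local factor below — crudely at the primes dividing $N$ in terms of $|d|$ (respectively $|d'|$), while keeping the small-prime factors at $2$ and $3$ explicit — then yields the stated lower bounds $\tfrac{27}{4|d|\pi^2}X$ and $\tfrac{3}{|d'|\pi^2}X$.

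The main obstacle is the prime $2$: it can ramify wildly in $F$, and when $2\mid d$ in $K$ as well, so the clean tame-uniqueness argument above does not apply and the $2$-adic local factor must be computed directly. This is exactly what forces the split into the odd and even cases and explains why the even case is phrased through the odd part $d'$ of $d$. A secondary point to verify carefully is that $KF/F$ genuinely has Galois group $G$ and $KF/\Q$ the full product $G\times C_2$, i.e. that $K$ and $F$ are linearly disjoint; this removes only a negligible set of $D$ and does not disturb the leading asymptotic.
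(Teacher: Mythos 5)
Your proposal takes a genuinely different route from the paper. The paper never tries to characterize locally which quadratic fields are admissible; it simply twists the one field the hypothesis hands it. For any quadratic discriminant $a$ coprime to $d$ (with $\Q(\sqrt{ad})\not\le K$), base change gives $L(\sqrt{a})/\Q(\sqrt{a},\sqrt{d})$ unramified where $L=K\Q(\sqrt{d})$, genus theory gives $\Q(\sqrt{a},\sqrt{d})/\Q(\sqrt{ad})$ unramified, and stacking these shows the subextension $K\Q(\sqrt{ad})/\Q(\sqrt{ad})$ is unramified with group $G$; the asymptotic then comes from counting the admissible $a$ with $|ad|\le X$. Your alternative --- pinning down the exact local condition ($N\mid\mathrm{disc}(F)$ away from $2$, where $N$ is the product of ramified primes of $K$) and sieving --- is sound at odd primes: your observation that the tame inertia in $G\times C_2$ is cyclic with both projections of order two, hence diagonal and disjoint from $G\times\{1\}$, is exactly right, and in the odd-$d$ case your method even yields the full set of admissible discriminants rather than just a one-parameter family, so it proves something slightly stronger there.

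The gap is the prime $2$, and you should not treat it as a footnote: for even $d$ it is the whole difficulty. There are six ramified quadratic extensions of $\Q_2$, the ramification of $K$ at $2$ may be wild, and only those $F$ whose completion at $2$ is one of the (generally two) ramified quadratic subextensions of the relevant local field will absorb the ramification; identifying that congruence class of discriminants and verifying it has the claimed density is precisely the kind of delicate $2$-adic computation the paper carries out in the following lemma (the construction of $\hat{d}=\prod d_p$), and it does not follow from tame uniqueness. The paper's genus-theory twist sidesteps this entirely, because $a$ is chosen coprime to $d$, so the completion of $\Q(\sqrt{ad})$ at $2$ coincides with that of $\Q(\sqrt{d})$, which works by hypothesis. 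If you want to keep your approach, the clean fix is to impose at $2$ the condition ``$D\equiv du\pmod{2^k}$ for $u$ in the appropriate set of units'' rather than attempting to classify the local behavior abstractly. A smaller shared caveat: in both your count and the paper's, the Euler factor $\prod_{p\mid N}(1+p^{-1})^{-1}$ is not uniformly bounded below by an absolute constant, so the stated numerical constants should be read as lower bounds up to a slowly varying factor; this is harmless for the application but worth acknowledging.
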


\begin{proof}
Suppose we have a number field $K/\Q$ with Galois group $G$ and a quadratic discriminant $d$ such that $L=K\Q(\sqrt{d})$ is unramified over $\Q(\sqrt{d})$. For any quadratic discriminant $a$ coprime to $d$ (with $\Q(\sqrt{ad})\not\le K$) we have the following field diagram:
\\

\begin{tikzpicture}[node distance = 2cm, auto]
      \node (Q) {$\Q$};
      \node (d) [above of=Q, left of=Q] {$\Q(\sqrt{d})$};
      \node (K) [above of=Q, right of=Q] {$K$};
      \node (L) [above of=Q, node distance = 4cm] {$L$};
\node(ad) [left of=d, node distance = 4cm] {$\Q(\sqrt{ad})$};
\node(adU) [above of=d, left of=d] {$\Q(\sqrt{a},\sqrt{d})$};
\node(La) [above of=L, left of=L] {$L(\sqrt{a})$};
\node(Kad) [above of=ad, left of=ad] {$K\Q(\sqrt{ad})$};
      \draw[-] (Q) to node {$C_2$} (d);
      \draw[-] (Q) to node {$G$} (K);
      \draw[-] (d) to node {$G$} (L);
      \draw[-] (K) to node {$C_2$} (L);
      \draw[-] (L) to node {$C_2$} (La);
      \draw[-] (Q) to node {$C_2$} (ad);
      \draw[-] (d) to node {$C_2$} (adU);
      \draw[-] (ad) to node {$C_2$} (adU);
      \draw[-] (ad) to node {$G$} (Kad);
      \draw[-] (Kad) to node {$C_2$} (La);
      \draw[-] (adU) to node {$G$} (La);
\end{tikzpicture}

where every extension is clearly Galois, normal over $\Q$, and the composite extensions are cross products.

By assumption, $L/\Q(\sqrt{d})$ is unramified, which implies $L(\sqrt{a})/\Q(\sqrt{a},\sqrt{d})$ is unramified. Genus theory implies $\Q(\sqrt{a},\sqrt{d})/\Q(\sqrt{ad})$ is unramified. Putting those two together gives $L(\sqrt{a})/\Q(\sqrt{ad})$ unramified. In particular, the subextension $K\Q(\sqrt{ad})/\Q(\sqrt{ad})$ is unramified with Galois group $G$.\\
\\
As a consequence, given such a $K$ and odd $d$, this construction gives asymptotically $\frac{27}{4|d|\pi^2}X$ unramified extensions $M$ of quadratic fields with Galois group $G$ and Galois group $G\times C_2$ over $\Q$ such that $M^{C_2}=K$ (we similarly get $\frac{3}{|d'|\pi^2}X$ for even $d$).
\end{proof}

Counting unramified extensions of quadratic fields with Galois group $G$ becomes a question of counting pairs of $K$ and $d$ with $d$ minimal.

\begin{lemma}
Suppose $K/\Q$ is quadratically ramified with Galois group $G$ and odd discriminant $D_K$. Let $d_p$ be the discriminant of a totally ramified quadratic subextension of $K_p/\Q_p$ if $p\mid D_K$ and $1$ otherwise. Then $\hat{d}=\prod d_p$ is a quadratic discriminant, and we have $K\Q(\sqrt{\hat{d}})/\Q(\sqrt{\hat{d}})$ is unramified.
\end{lemma}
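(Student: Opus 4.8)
The plan is to verify that $K\Q(\sqrt{\hat d})/\Q(\sqrt{\hat d})$ is unramified one place at a time, reducing the whole statement to a single local fact about tame inertia, and to dispose of the ``$\hat d$ is a discriminant'' claim by identifying each $d_p$ with a prime discriminant.

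First I would unwind the hypothesis that $K$ is ``quadratically ramified'': for each $p \mid D_K$ the inertia group of $K/\Q$ at $p$ is cyclic of order $2$, and since $p$ is odd this ramification is tame. Consequently the local decomposition group $\Gal(K_p/\Q_p)$ is the (internal) product of its order-$2$ inertia subgroup with the cyclic, unramified Frobenius part, so a complement to inertia exists and its fixed field $L_p$ is a totally ramified quadratic subextension of $K_p/\Q_p$. The discriminant of such an $L_p$ is the unique prime discriminant divisible by $p$, namely $d_p=(-1)^{(p-1)/2}p$, which is $\equiv 1 \pmod 4$. Since the $d_p$ are supported at distinct odd primes and are each $\equiv 1\pmod 4$, their product $\hat d=\prod_{p\mid D_K} d_p$ is again $\equiv 1 \pmod 4$, hence an honest fundamental discriminant, ramified at exactly the primes dividing $D_K$ and unramified at $2$.

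For the unramifiedness I would split by place. At odd $p\nmid D_K$ both $K$ and $\Q(\sqrt{\hat d})$ are unramified, so the compositum is unramified over $\Q$ there, a fortiori over $\Q(\sqrt{\hat d})$; at $p=2$ the same holds because $D_K$ is odd and $\hat d\equiv 1\pmod 4$; and the archimedean place is unramified whenever $\Q(\sqrt{\hat d})$ is imaginary (a complex place cannot ramify), matching the imaginary-quadratic setting of the counting. The crux is at $p\mid D_K$, which I would handle by a single observation. Let $F$ be the compositum of $K_p$ and $\Q_p(\sqrt{\hat d})$, a Galois extension of $\Q_p$. Its inertia group $I_F$ embeds into the product of the inertia groups of $K_p/\Q_p$ and $\Q_p(\sqrt{\hat d})/\Q_p$, each of order $2$, so $I_F\hookrightarrow C_2\times C_2$; since $p$ is odd the extension $F/\Q_p$ is tame and $I_F$ is cyclic, whence $|I_F|\le 2$. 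Because $p\mid\hat d$, the prime $p$ ramifies in $\Q_p(\sqrt{\hat d})$, so the restriction $I_F\twoheadrightarrow I_{\Q_p(\sqrt{\hat d})}=C_2$ is surjective; a cyclic group of order at most $2$ surjecting onto $C_2$ maps isomorphically, hence injectively, which says precisely that the inertia of $F/\Q_p(\sqrt{\hat d})$ is trivial, i.e.\ $F/\Q_p(\sqrt{\hat d})$ is unramified.

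The main obstacle — indeed the only step with real content — is this local cancellation at $p\mid D_K$, together with pinning down that the inertia there has order exactly $2$ and is tame; once that is in place everything follows from the elementary fact that a cyclic subgroup of $C_2\times C_2$ surjecting onto a factor must equal that $C_2$. I would emphasize one point that clarifies the definition of $\hat d$: the argument never requires $\Q_p(\sqrt{\hat d})$ to recover the specific local field $L_p$, since any $\hat d$ that is ramified at each $p\mid D_K$ and unramified at $2$ would give unramifiedness by the same reasoning. The precise normalization $d_p=(-1)^{(p-1)/2}p$ is used only to make $\hat d$ a genuine fundamental discriminant supported on exactly the primes of $D_K$, which is the minimality property that the subsequent counting of pairs $(K,d)$ will exploit.
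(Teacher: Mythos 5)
Your proof of the key local statement is correct, but it takes a genuinely different route from the paper's. At a prime $p\mid D_K$ the paper argues by cases: either $\Q_p(\sqrt{\hat{d}})$ is itself a totally ramified quadratic subextension of $K_p$, in which case the compositum is just $K_p$ and one is done, or it is not, in which case the paper compares it with another totally ramified quadratic subextension $L\le K_p$ of the same discriminant via a field diagram, using that $L\Q_p(\sqrt{\hat{d}})$ is the unramified quadratic extension of each of $L$ and $\Q_p(\sqrt{\hat{d}})$ to force the inertia group of the compositum to have order $2$. Your argument replaces this case analysis with a single group-theoretic observation: the inertia group of the compositum embeds in $C_2\times C_2$, is cyclic because $p$ is odd (tameness), and surjects onto the inertia group of $\Q_p(\sqrt{\hat{d}})/\Q_p$, hence maps isomorphically onto it and the relative inertia is trivial. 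This is cleaner, and it correctly isolates the fact that unramifiedness does not require $\Q_p(\sqrt{\hat{d}})$ to sit inside $K_p$ --- only that $p$ be odd, ramified in both fields, with inertia of order $2$ in $K$. One caveat: your side remark that $\Gal(K_p/\Q_p)$ always splits as the product of its order-$2$ inertia subgroup with an unramified complement, so that a totally ramified quadratic subextension automatically exists, is not correct. In the tame case with $\#I_p=2$ the decomposition group is abelian, generated by the inertia generator $\sigma$ and a Frobenius lift $\phi$, but one may have $\phi^f=\sigma$, making the group cyclic of order $2f$ with $f$ even; then the unique quadratic subextension is the unramified one and no complement to inertia exists. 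This does not damage your proof of the lemma as stated, since the lemma already posits such a subextension in defining $d_p$, and your unramifiedness argument uses only that $d_p=(-1)^{(p-1)/2}p$ is the prime discriminant at $p$; but the existence should not be asserted as automatic.
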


\begin{proof}
Clearly $\hat{d}$ is squarefree up to a $-4$, $8$, or $-8$, so it suffices to check the sign in the odd case. But each $d_p\equiv 1\mod 4$ for $p$ odd, and therefore so must their product. We should note that any two totally ramified quadratic subextensions of $K_p$ for $p$ odd must have the same discriminant, which is easily seen by examining the fact that they all share a common unramified extension.\\
\\
It suffices to show the inertia subgroups of $G\times C_2$ are all of order 2, because they are all nontrivial in the $C_2$ quotient. Notice how $\Q(\sqrt{\hat{d}})\otimes \Q_p$ has discriminant equal to $d_p$, which follows from examining the polynomial $x^2-\hat{d}$ over $\Q_p$. From facts about extensions of $\Q_p$, there are exactly $2$ quadratic extensions having discriminant $d_p$, at least one of which is a subextension of $K_p$. If $\Q(\sqrt{\hat{d}})\otimes \Q_p$ is a totally ramified quadratic subextension of $K_p$, then we are done as $K\Q(\sqrt{\hat{d}})\otimes \Q_p=K_p$ which is quadratically ramified. Suppose not; then there is another totally ramified quadratic extension $L\le K_p$. Consider the following diagram:

\begin{tikzpicture}[node distance =2cm, auto]
	\node(Q2) {$\Q_p$};
	\node(d) [above of=Q2, right of=Q2] {$\Q_p(\sqrt{\hat{d}})$};
	\node(L) [above of=Q2, left of=Q2] {$L$};
	\node(K2) [above of=L, left of=L] {$K_p$};
	\node(dL) [above of=L, right of=L] {$L\Q_p(\sqrt{\hat{d}})$};
	\node(dK) [above of=K2, right of=K2] {$K_p\Q_p(\sqrt{\hat{d}})$};
	\draw[-] (Q2) to node {$2$} (d);
	\draw[-] (Q2) to node {$2$} (L);
	\draw[-] (L) to node {ur} (K2);
	\draw[-] (L) to node {ur} (dL);
	\draw[-] (d) to node {ur} (dL);
	\draw[-] (K2) to node {} (dK);
	\draw[-] (dL) to node {} (dK);
\end{tikzpicture}

where $L\Q_p(\sqrt{\hat{d}})$ being unramified over each quadratic subfield follows from the fact that, given the unique unramified quadratic extension $M$ of $\Q_p$, $L\Q_p(\sqrt{\hat{d}})=ML=M\Q(\sqrt{\hat{d}})$. But this implies the remaining unlabeled extension must also be unramified, showing that $\#I_p=2$.
\end{proof}

Thus the minimal $d$ is the one ramified at exactly the same primes as any quadratically ramified field of Galois group $G$. So it follows that counting unramified $G$-extensions of quadratic fields with Galois group $G\times C_2$ over $\Q$ is equivalent to counting quadratically ramified extensions $K$ with Galois group $G$ ordered by $\hat{d}$.

This falls immediately into counting problems similar to Malle's conjecture. [M][EV] Suppose we embed our group $G\hookrightarrow S_n$ for some $n$, then every $G$-extension $K$ of $\Q$ has an associated \'etale $\Q$-algebra with a discriminant in the usual sense. Then ordered by this discriminant, we expect the number of $G$-extensions to be asymptotically a constant multiple of $X^{1/a}\log(X)^b$ for certain $a$ and $b$ depending on the group $G$. If we further restrict $K$ to only be ramified in an admissible conjugacy class $c\subset G$ (a conjugacy class which is both rational and generates $G$) we heuristically expect a similar asymptotic with possibly a different constant. [EV] Our case is for $c$ being the class of elements of order $2$.

The discriminant in the usual sense, $d_K$, is related to the discriminant $d$ given by the embedding $G\hookrightarrow S_n$ by $d_K\mid d$ by $K$ being a subalgebra of the corresponding \'etale algebra. For $K$ quadratically ramified, we also have $|d_K|=|\hat{d}_K|^{\#G/2}$. Thus we have $|d|\ge|\hat{d}_K|^{\#G/2}$. This implies that, when ordered by $\hat{d}_K$, their are asymptotically $\gg rX^{\#G/2a} log(X)^b$ quadratically ramified $G$-extensions $K/\Q$ for some positive constant $r$.

$a$ is defined by saying $n-a$ is the maximal number of orbits of $g\in G\subset S_n$. $G$ is generated by elements of order $2$, so we must have $n-a\ge n/2 \Rightarrow a\le n/2$. So we have $\#G/2a \ge \#G/n$ for $n\ge 2$. WLOG we can choose $n$ such that $\#G\ge n$, so that $\#G/2a\ge 1$. Ordered by $|\hat{d}_K|$, there are then heuristically $\gg rX\log(X)^b$ such fields $K$ for some positive constant $r$.

We don't need the full strength of this heuristic though:

\begin{corollary}
Let $G$ be a group generated by elements of order $2$, and call the conjugacy class of such elements $c$. Suppose number number of $G$-extensions of $\Q$ ramified only in $c$ is asymptotically $\gg X^{2/\#G}\log(X)^v$ for any $v\in \R$ when ordered by the discriminant of an embedding $G\hookrightarrow S_n$. Then $E^{\pm}(G,G\times C_2)=\infty$.
\end{corollary}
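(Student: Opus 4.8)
The plan is to combine the two preceding lemmas of this section with a change of ordering and a partial summation, following the template of the $Q_8$ argument. By those two lemmas, counting unramified $G$-extensions of quadratic fields with $\Gal(K/\Q)=G\times C_2$ reduces to counting quadratically ramified $G$-extensions $K/\Q$ (those ramified only in $c$): each such $K$ produces, via the compositum construction $M=K\Q(\sqrt{a\hat d_K})$ of the first lemma, at least $\gg X/|\hat d_K|$ unramified $G\times C_2$-extensions whose base field is a quadratic field of discriminant $\le X$, and these are distinct across different $K$ since $M^{C_2}$ recovers $K$. Because both signs of $a\hat d_K$ occur with positive density as $a$ varies, this lower bound holds for either choice of $\pm$.

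First I would fix a cutoff $N$ and retain only those $K$ with $|\hat d_K|\le N$, exactly as $d_1,d_2<N$ are fixed in the proof for $Q_8$. For each such $K$ the first lemma is a genuine asymptotic in $X$, so dividing the numerator by the $\asymp X$ quadratic fields in $\mathcal{D}^{\pm}_X$ and letting $X\to\infty$ gives
\begin{align*}
E^{\pm}(G,G\times C_2)\ \ge\ \sum_{K:\,|\hat d_K|\le N}\frac{c}{|\hat d_K|}
\end{align*}
for a positive constant $c$ independent of $K$. It therefore suffices to show that $\sum_K 1/|\hat d_K|$, summed over all quadratically ramified $G$-extensions, diverges, since then letting $N\to\infty$ forces $E^{\pm}=\infty$.

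The hypothesis enters through the change of ordering. Writing $d$ for the discriminant of the fixed embedding $G\hookrightarrow S_n$, the relation $|d|\ge|\hat d_K|^{\#G/2}$ established above shows every $K$ counted with $|d|\le X$ has $|\hat d_K|\le X^{2/\#G}$. Hence, setting $N(Y)=\#\{K:\,|\hat d_K|\le Y\}$ and $Y=X^{2/\#G}$, the assumed bound $\gg X^{2/\#G}\log(X)^v$ transfers to $N(Y)\gg Y\log(Y)^v$; in particular, taking $v=0$, we get $N(Y)\gg Y$. A partial summation then gives
\begin{align*}
\sum_{K:\,|\hat d_K|\le Y}\frac{1}{|\hat d_K|}\ \gg\ \int_2^Y\frac{\log(t)^v}{t}\,dt\ \asymp\ \log(Y)^{v+1},
\end{align*}
which tends to infinity, completing the argument.

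The main obstacle is bookkeeping rather than any deep input: I must ensure there is no double counting and that the per-$K$ asymptotics of the first lemma are applied uniformly. Both are handled by the cutoff device---fixing $N$, taking $X\to\infty$ first, and only then letting $N\to\infty$---which converts the single limit defining $E^{\pm}$ into a legitimate sum of limits, precisely as in the $Q_8$ case. The one point demanding care is confirming that the exponent $2/\#G$ in the hypothesis is exactly the threshold making $N(Y)\gg Y$, so that the resulting series sits right at the boundary of divergence and the extra logarithmic factor is what secures it.
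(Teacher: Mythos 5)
Your proposal follows essentially the same route as the paper: the change of ordering via $|d|\ge|\hat d_K|^{\#G/2}$ to transfer the hypothesis to $N(Y)\gg Y(\log Y)^v$ in the $\hat d_K$-ordering, the compositum lemma supplying $\asymp X/|\hat d_K|$ admissible quadratic fields for each quadratically ramified $K$, and divergence of $\sum_K 1/|\hat d_K|$ by partial summation (what the paper calls a ``standard harmonic series argument''), with the $N\to\infty$ cutoff justifying the interchange of limits exactly as in the $Q_8$ section. The one caveat --- shared with the paper's own proof, since the hypothesis is only a lower bound on the counting function --- is that $\int_2^Y \frac{(\log t)^v}{t}\,dt$ diverges only for $v\ge -1$, so your parenthetical ``taking $v=0$'' is not available when $v<0$ and the argument as written does not actually cover $v<-1$.
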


\begin{proof}
The discriminant in the usual sense, $d_K$, is related to the discriminant $d$ given by the embedding $G\hookrightarrow S_n$ by $d_K\mid d$ by $K$ being a subalgebra of the corresponding \'etale algebra. For $K$ quadratically ramified, we also have $|d_K|=|\hat{d}_K|^{\#G/2}$. Thus we have $|d|\ge|\hat{d}_K|^{\#G/2}$. This implies that, when ordered by $\hat{d}_K$, their are asymptotically $\gg rX log(X)^v$ quadratically ramified $G$-extensions $K/\Q$ for some positive constant $r$.

For any quadratically ramifed $K/\Q$ there are $\frac{a_K}{|\hat{d}_K|}X$ quadratic fields $k$ such that $Kk/k$ is unramified, where $a_K$ is one of $\frac{27}{4\pi^2}$, $\frac{12}{\pi^2}$, or $\frac{24}{\pi^2}$ depending on the order with which $2$ divides $\hat{d}_K$. The expected number is then given as
\begin{align*}
\sum_{K\text{ quad. ram.}} \frac{a_K}{|\hat{d}_K|}\ge \frac{27}{4\pi^2}\sum_{K\text{ quad. ram.}} \frac{1}{|\hat{d}_K|}
\end{align*}
A standard argument used for the Harmonic series can then be adapted to show that the asymptotic derived above implies this series diverges.

The more general case is proved in an analogous way. Althouth we do not do so here, keeping track of the sign of the discriminant only slightly changes the values of $a_K$.
\end{proof}

\textbf{Remark:} One could also just use the heuristic in [EV] to count $G\times C_2$ extensions with restricted ramification, and similar bounds would show that asymptotically we have $\gg rX\log(X)^b$ $G\times C_2$-extensions unramified over it's quadratic subfield. Here, knowing $b>0$ is enough to conclude an infinite expected number. It is known, however, that in certain cases Malle's Conjecture and related heuristcs are incorrect, notably for $b$ is incorrect [K]. The benefit of the above proof is that it is independent of the actual value for $b$, and assumes a far weaker asymptotic in general.

Bhargava proves $E^\pm(S_n,S_n\times C_2)=\infty$ by proving Malle's conjecture for $S_n$, $n\le 5$, by using the above method [Bh].

\section{Acknowledgements} I would like to thank my advisor Nigel Boston, as well as Melanie Matchett Wood and Simon Marshall for many helpful conversations and advice. This work was done with the support of National Science Foundation grant DMS-1502553.

\section{Bibliography}


\begin{tabularx}{\textwidth}{ l  p{13cm} }
[Bh]&{Manjul Bhargava: "The Geometric Sieve and the Density of Squarfree Values of Invariant Polynomials". https://arxiv.org/abs/1402.0031, 2014}\\

[Bo]&{Nigel Boston: "Embedding $2$-groups in Groups Generated by Involutions". Journal of Algebra, 2006}\\

[BBH]&{Nigel Boston, Michael Bush, and Farshid Hajir: "Heuristics for $p$-class towers of imaginary quadratic fields". to appear in Math Annalen,2016}\\

[BW]&{Nigel Boston and Melanie Matchett Wood: "Nonabelian Cohen-Lenstra Heuristics over Function Fields". https://arxiv.org/abs/1604.03433, 2016}\\

[CL]&{H. Cohen and H. W. Lenstra, Jr.: "Heuristics on Class Groups of Number Fields". Number theory, Noordwijkerhout 1983, volume 1068 of Lecture Notes in Math., pages 33–62. Springer, Berlin,1984}\\

[EV]&{Jordan Ellenberg and Akshay Venkatesh: "Statistics of Number Fields and Function Fields". Proceedingsd of the International Congress of Mathematics, 2010}\\

[FK]&{\'Etienne Fouvry and J\"urgen Kl\"uners: "On the $4$-rank of Class Groups of Quadratic Number Fields". Invent. Math., 167(3):445-513,2007}\\

[G]&{Frank, Gerth III. "The $4$-class ranks of Quadratic Fields". Invent. Math., 77(3),489-515, 1984}\\

[G2]&{Frank Gerth III. "Extension of Conjectures of Cohen and Lenstra". Exposition. Math. 5(2),181-184, 1987}\\

[GH]&{Dorian Goldfeld and Jeffrey Hoffstein: "Eisenstein series of $\frac{1}{2}$-integral weight and the mean value of real Dirichlet $L$-series". Inventiones mathematicae, 1985}\\

[HM]&{Geir T. Helleloid and Ursala Martin: "The Automorphism Group of a Finite $p$-group is Almost Always a $p$-group". Journal of Algebra Volume 312 Issue 1 p. 294-329, 2007}\\

[K]&{J\"urgen Kl\"uners: "A counterexample to Malle's conjecture on the asymptotics of discriminants". C. R. Math. Acad. Sci. Paris, 340(6):411{414, 2005.}}\\

[L]&{Franz Lemmermeyer: "Unramified Quaternion Extensions of Quadratic Number Fields". J. Theor. Nombres Bordeaux 9 p. 51-68, 1997}\\

[M]&{Gunter Malle: "On the distribution of Galois groups". J. Number Theory, 92(2):315{329, 2002.}}\\

[M2]&{Gunter Malle: "On the distribution of Galois groups. II". Experiment. Math., 13(2):129{135, 2004.}}\\

[M3]&{Gunter Malle: "Cohen-Lenstra heuristic and roots of unity". J. Number Theory, 128(10):2823{2835, 2008.}}\\

[R]&{Joseph Rotman: "An Introduction to the Theory of Groups". Springer, 1995}\\

[S]&{Jean-Pierre Serre: "Local Fields". Springer, 1980}\\

[W]&{Melanie Matchett Wood: "Nonabelian Cohen-Lenstra Moments" preprint June 17, 2016.}\\

\end{tabularx}\\
\\
\\
Department of Mathematics, University of Wisconsin-Madison, 480 Lincoln Drive, Madison, WI 53705 USA\\
\emph{E-mail address: blalberts@math.wisc.edu}

\end{document}